\def\Z{{\mathbb Z}}
\def\Q{{\mathbb Q}}
\def\C{{\mathbb C}}
\def\P{{\mathbb P}}
\def\L{{\mathbb L}}
\def\cG{{\mathcal G}}
\def\cN{{\mathcal N}}
\def\O{{\mathcal O}}
\def\cP{{\mathcal P}}
\def\U{{\mathcal U}}
\def\X{{\mathcal X}}
\def\cZ{{\mathcal Z}}
\def\w{\omega}
\def\g{{\mathfrak g}}
\def\n{{\mathfrak n}}
\def\p{{\mathfrak p}}
\def\r{{\mathfrak r}}
\def\u{{\mathfrak u}}
\def\z{{\mathfrak z}}
\def\sigmatilde{{\tilde{\sigma}}}
\def\phihat{\hat{\phi}}
\def\etabar{{\overline{\eta}}}
\def\wtilde{{\tilde{\w}}}
\def\what{{\hat{\w}}}
\def\thetatilde{\tilde{\theta}}
\def\Ghat{\widehat{G}}
\def\uhat{\hat{\u}}
\def\ghat{\hat{\g}}
\def\Uhat{\widehat{\U}}
\def\cGhat{\widehat{\cG}}
\def\kbar{{\overline{k}}}
\def\Gbar{{\overline{G}}}
\def\Rbar{{\overline{R}}}
\def\cGbar{{\overline{\cG}}}
\def\sbar{\overline{s}}
\def\vtilde{\tilde{v}}
\def\tinv{t^{-1}}
\def\Ql{{\Q_\ell}}
\def\Gm{{\mathbb{G}_m}}
\def\cts{\mathrm{cts}}
\def\et{\mathrm{\acute{e}t}}
\def\nab{\mathrm{nab}}
\def\dr{\mathrm{DR}}
\def\Het{H_\et}
\def\Hnab{H_\nab}
\def\dot{\bullet}
\def\blank{\phantom{x}}
\def\Rep{{\sf{Rep}}}
\def\Vec{{\sf{Vec}}}
\newcommand\id{\operatorname{id}}
\newcommand\Hom{\operatorname{Hom}}
\newcommand\Ext{\operatorname{Ext}}
\newcommand\Aut{\operatorname{Aut}}
\newcommand\Out{\operatorname{Out}}
\newcommand\Gr{\operatorname{Gr}}
\newcommand\Char{\operatorname{char}}
\newcommand\Sect{\operatorname{Sect}}
\newtheorem{theorem}{Theorem}[section]
\newtheorem{lemma}[theorem]{Lemma}
\newtheorem{proposition}[theorem]{Proposition}
\newtheorem{corollary}[theorem]{Corollary}
\newtheorem{bigtheorem}{Theorem}
\newtheorem{bigcorollary}[bigtheorem]{Corollary}
\theoremstyle{definition}
\theoremstyle{remark}
\begin{document}

\title{Remarks on Non-Abelian Cohomology of Proalgebraic Groups}

\author{Richard Hain}
\address{Department of Mathematics\\ Duke University\\
Durham, NC 27708-0320}
\email{hain@math.duke.edu}

\thanks{Supported in part by grant DMS-1005675 from the National Science
Foundation.}

\date{\today}

%

\maketitle

\section{Introduction}

In this note we present a version of Kim's non-abelian cohomology
\cite{kim:coho} which appears to be well suited to computations with motivic
Galois groups. Whereas Kim considers extensions of a profinite group by a
unipotent group, we consider certain extensions of a proalgebraic group $\cG$ by
a prounipotent group $\cP$. This allows us to work instead with Lie algebras.
This, combined with the fact that we work in a situation where all of the basic
objects have weight filtrations with good exactness properties, helps make our
version of non-abelian cohomology somewhat computable.

Throughout, $F$ is a field of characteristic zero. Suppose that $R$ is a
reductive, algebraic $F$-group endowed with a non-trivial, central cocharacter
$\w : \Gm \to R$. We suppose that $\cG$ is a negatively weighted extension
$$
1 \to \U \to \cG \to R \to 1
$$
of $R$ by a prounipotent group $\U$ in the sense of
\cite{hain-matsumoto:weighted}. That is, the weights of the natural action
$$
\xymatrix{
\Gm \ar[r]^\w & R & \ar[l]_\simeq \cG/\U \ar[rr]^(.4){\text{conjugation}}
&& \Aut H_1(\U)
}
$$
of $\Gm$ on the abelianization $H_1(\U)$ of $\U$ are all negative. As we shall
recall in Section~\ref{sec:weights}, every pro-$\cG$-module $V$ has a natural
weight filtration $W_\dot V$ with good exactness properties.

The coefficient group is a prounipotent $F$-group $\cP$, which we assume to have
trivial center. Fix an {\em outer} action $\phi : \cG \to \Out \cP$ of $\cG$ on
$\cP$. This induces an action of $\cG$ on $H_1(\cP)$, so that $H_1(\cP)$ has a
natural weight filtration. We will assume that this is negatively weighted ---
i.e., $H_1(\cP)=W_{-1}H_1(\cP)$ --- and that each of its weight graded quotients
is finite dimensional. Under these conditions, the Lie algebra $\p$ of $\cP$ has
a natural weight filtration with finite dimensional quotients. It exponentiates
to a weight filtration of $\cP$. The group of weight filtration preserving
automorphisms
$$
\Aut_W\cP :=\varprojlim_{n<0} \Aut_W (\cP/W_n\cP)
$$
of $\cP$ is a proalgebraic group. Our goal is to define a non-abelian cohomology
scheme $\Hnab^1(\cG,\cP)$, which will represent $\cP$-conjugacy classes of
sections of an extension of $\cG$ by $\cP$.

The condition that $\cP$ have trivial center implies that this extension is
uniquely determined by the outer action $\phi$. Indeed, since $\cP$ has trivial
center, the sequence
$$
1 \to \cP \to \Aut_W \cP \to \Out_W \cP \to 1,
$$
where the first map takes an element of $\cP$ to the corresponding inner
automorphism, is an exact sequence of proalgebraic $F$-groups. Pulling this
extension back along $\phi$ gives an extension
\begin{equation}
\label{eqn:extn-phi}
1 \to \cP \to \cGhat_\phi \to \cG \to 1
\end{equation}
of $\cG$ by $\cP$. We do not make the conventional assumption that this sequence
splits, nor do we distinguish any one section when it does split.

Denote the Lie algebra of $\cG$ by $\g$. It acts on the weight graded quotients
of $\p$. Denote the group of components of $R$ by $\Rbar$. Frequently we will
impose the finiteness condition
\begin{equation}
\label{eqn:finiteness}
\dim H^1(\g,\Gr^W_{-m} \p)^\Rbar < \infty
\end{equation}
where the $\Rbar$-action on the cohomology group is induced by the adjoint
action of $\cGhat$ on $\ghat$ and $\p$. Examples of extensions that satisfy all
of the conditions above, including the finiteness conditions, arise naturally in
the study of sections of families of smooth projective curves over finitely
generated fields, such as those studied in \cite{hain:rat-pts}.

\begin{bigtheorem}[{cf.\ \cite[Prop.~2]{kim:coho}}]
\label{thm:representability}
Suppose that $N\ge 1$ and that the finiteness condition (\ref{eqn:finiteness})
holds for $1\le m < N$. Under the assumptions above, there is an affine
$F$-scheme of finite type $\Hnab^1(\cG,\cP/W_{-N}\cP)$ which represents the
functor that takes an $F$-algebra $A$ to the set
$$
\{\text{sections of }(\cGhat_\phi/W_{-N}\cP)\otimes_F A \to \cG\otimes_F A\}/
\text{conjugation by }\cP(A).
$$
\end{bigtheorem}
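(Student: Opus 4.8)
The plan is to prove the theorem by induction on $N$, reducing at each stage to a cohomological obstruction problem that is controlled by the finiteness hypothesis (\ref{eqn:finiteness}). Write $\cP_N := \cP/W_{-N}\cP$ and let $\Sect_N$ denote the functor on $F$-algebras sending $A$ to the set of sections of $(\cGhat_\phi/W_{-N}\cP)\otimes_F A \to \cG\otimes_F A$ modulo $\cP_N(A)$-conjugacy. For the base case $N=1$ we have $W_{-1}\cP=\cP$ (since $\cP$ is negatively weighted, $\p=W_{-1}\p$), so $\cP_1$ is trivial, $\cGhat_\phi/W_{-1}\cP=\cG$, and $\Sect_1$ is represented by $\operatorname{Spec} F$. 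So the content is entirely in the inductive step.

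For the inductive step, fix $N\ge 2$ and assume $\Hnab^1(\cG,\cP_{N-1})$ exists as an affine $F$-scheme of finite type representing $\Sect_{N-1}$. The key device is the central extension
$$
1 \to \Gr^W_{-N+1}\cP \to \cP_N \to \cP_{N-1} \to 1
$$
(the kernel is central in $\cP_N$ because $[\cP,W_{-N+1}\cP]\subseteq W_{-N+1}\cP$ modulo $W_{-N}\cP$ is trivial in $\cP_N$), together with the corresponding central extension of $\cGhat_\phi/W_{-N}\cP$ by the abelian prounipotent $F$-group $V := \Gr^W_{-N+1}\cP$, which is a finite-dimensional $\cG$-module of pure weight $-N+1$. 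First I would take the projection $\Sect_N \to \Sect_{N-1}$ and analyze, over a variable $A$-point $s$ of $\Hnab^1(\cG,\cP_{N-1})$, the fiber: lifting a section mod $W_{-N}\cP$ through this central extension is, by the standard non-abelian cohomology formalism, obstructed by a class in $H^2$ of $\cG$ (equivalently, continuous Lie-algebra cohomology $H^2(\g,-)$ with its $\Rbar$-equivariance) with coefficients in the twist $V_s$ of $V$ by the section $s$; and when the obstruction vanishes, the set of lifts modulo $\cP_N(A)$-conjugacy is a torsor under $H^1(\g, V_s)^{\Rbar}$. The hypothesis (\ref{eqn:finiteness}) for $m=N-1$ says precisely that this $H^1$ is finite-dimensional. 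One then has to promote this pointwise description to a statement of representability: the obstruction class defines a section of a coherent sheaf over $\Hnab^1(\cG,\cP_{N-1})$ (its formation commutes with base change because everything in sight is built from finite-dimensional modules with exact weight filtrations, by the results recalled in Section~\ref{sec:weights}), and $\Hnab^1(\cG,\cP_N)$ is cut out as the zero scheme of this section, over which the space of lifts is a torsor under a vector-bundle-like functor represented by the (finite-type, by finiteness of $H^1$) relative $\operatorname{Spec}$ of the symmetric algebra on the dual. Since a torsor under an affine-of-finite-type group scheme over an affine-of-finite-type base, with everything in the category of affine schemes, is again affine of finite type, we conclude.

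The main obstacle I anticipate is the passage from the fiberwise/pointwise cohomological picture to genuine representability — i.e., verifying that the obstruction to lifting and the torsor structure on lifts are \emph{families} in the scheme-theoretic sense, compatibly with arbitrary base change on $A$. Concretely: one must check that $A \mapsto H^i(\g, V_s\otimes_F A)^{\Rbar}$ is computed by a complex of finitely generated projective (indeed free) $A$-modules whose formation commutes with base change, uniformly in the universal $s$; this is where the negativity of the weights and the good exactness of the weight filtration do the real work, because they force the relevant cochain complexes to be built from finite-dimensional pieces and make the cohomology well-behaved in families. A secondary, more bookkeeping-type difficulty is checking that the $\cP_N(A)$-conjugacy relation is the correct one — that passing from $\cP_{N-1}(A)$-conjugacy to $\cP_N(A)$-conjugacy on lifts is exactly quotienting by the image of $V_s(A)=V_s\otimes_F A$ acting by translation — which is a direct computation with the central extension but must be done carefully to get the $H^1$ (rather than, say, a quotient of $Z^1$) appearing as the fiber.
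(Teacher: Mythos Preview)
Your inductive, obstruction-theoretic approach is sound and can be made to work, but it is a genuinely different route from the paper's. The paper does \emph{not} argue by induction on $N$; instead it fixes a lift $\what:\Gm\to\cGhat_\phi$ of the central cocharacter and proves that every $\cP(A)$-conjugacy class of sections contains a \emph{unique} $\what$-graded section (using that centralizers of such lifts in the prounipotent radical are trivial, Lemma~\ref{lem:triv_cent}, together with the conjugacy of lifts from the appendix). This identifies the quotient functor directly with the functor of $\what$-graded sections, which in turn is identified with the scheme of $R$-invariant graded Lie-algebra sections of $\Gr^W_\dot\ghat\to\Gr^W_\dot\g$; the latter is cut out by the bracket equations inside a finite-type affine space as soon as the $H^1$ groups are finite-dimensional. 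So the paper obtains $\Hnab^1(\cG,\cP/W_{-N})$ in one shot, and only \emph{afterwards} proves the exact sequence (Proposition~\ref{prop:nab_special}) that you are using as your inductive engine.

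Two remarks on your version. First, your twist $V_s$ is unnecessary: since $\p=W_{-1}\p$, the $\cGhat_\phi$-action on $\Gr^W_{-N+1}\p$ factors through $\cG$, so the coefficient module is independent of the section $s$ and the ``vector-bundle-like functor'' is just the constant vector group $H^1(\g,\Gr^W_{-N+1}\p)^{\Rbar}$. Second, the step you flag as the main obstacle---passing from fiberwise torsors to representability---does go through, but the clean reason is slightly different from what you wrote: once $\delta$ is defined as a natural transformation (hence a morphism, by Yoneda), the vanishing of the obstruction over any $A$-point of $\delta^{-1}(0)$ already produces a lift over that same $A$ (no base extension needed), so applying this to the universal point of $\delta^{-1}(0)$ gives a global section of your torsor and trivializes it. What each approach buys: the paper's $\Gm$-splitting argument gives a concrete description $\Hnab^1(\cG,\cN)\cong\Sect_\dot^R(\Gr^W_\dot\g,\Gr^W_\dot\n)$ (Corollary~\ref{cor:concrete}) that is used in proving the exact sequence and is well suited to explicit computation; your approach is closer to Kim's original and more portable to settings without a grading, but it yields less structural information about the representing scheme.
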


The central cocharacter $\w : \Gm \to R$ plays a key role. The theorem is proved
by showing that for each lift $\what : \Gm \to \cGhat_\phi$ of $\w$ to
$\cGhat_\phi$, every $\cP(A)$ orbit of sections of
$(\cGhat_\phi/W_{-N}\cP)\otimes_F A \to \cG\otimes_F A$ contains a unique
$\Gm$-invariant section.

The theorem provides a natural example where the GIT quotient of a scheme by a
unipotent group is a scheme. Two ingredients that contribute to this happy
situation are: (1) there are compatible $\Gm$-actions on the space of sections
and on the unipotent group acting on them, and (2) the weights of the action of
$\Gm$ on the Lie algebra of the unipotent group are all negative. It would be
useful to see if such conditions allow one to construct nice GIT quotients by
non-reductive groups in more general situations.

\begin{bigcorollary}
If the finiteness condition (\ref{eqn:finiteness}) holds for all $m\ge 1$, then
the affine $F$-scheme
$$
\Hnab^1(\cG,\cP) := \varprojlim \Hnab^1(\cG,\cP/W_{-N}\cP)
$$
represents the functor
$$
\{\text{sections of }\cGhat_\phi\otimes_F A \to \cG\otimes_F A\}/
\text{conjugation by }\cP(A).
$$
\end{bigcorollary}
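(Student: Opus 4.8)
The plan is to deduce the corollary from Theorem~\ref{thm:representability} by passing to the inverse limit, the real content being to check that this limit is compatible with the operation of quotienting sections by $\cP$-conjugacy. First I would record the inverse system: the surjections $\cGhat_\phi/W_{-N-1}\cP \to \cGhat_\phi/W_{-N}\cP$ induce morphisms of the representing schemes $\Hnab^1(\cG,\cP/W_{-N-1}\cP) \to \Hnab^1(\cG,\cP/W_{-N}\cP)$ --- via Yoneda, since conjugation by $\cP(A)$ is compatible with the quotient $\cP(A) \to (\cP/W_{-N}\cP)(A)$ --- and $\Hnab^1(\cG,\cP) := \varprojlim_N \Hnab^1(\cG,\cP/W_{-N}\cP)$ is then an affine $F$-scheme, namely the spectrum of the direct limit of the coordinate rings; it is no longer of finite type, which is why that hypothesis is dropped. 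Because a representable functor carries an inverse limit of schemes to the inverse limit of sets, $\Hnab^1(\cG,\cP)$ represents $A \mapsto \varprojlim_N \Hnab^1(\cG,\cP/W_{-N}\cP)(A)$, so it remains to identify this with the functor $h$ of the corollary via the evident map sending the class of a section $s$ to the system of classes of its reductions $s \bmod W_{-N}\cP$.

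For surjectivity I would lift a compatible system of conjugacy classes to a compatible system of sections. Since the weight filtration on $\p$, hence on $\cP$, is separated with finite-dimensional graded quotients, $\cP = \varprojlim_N \cP/W_{-N}\cP$ and therefore $\cGhat_\phi = \varprojlim_N \cGhat_\phi/W_{-N}\cP$; consequently a section of $\cGhat_\phi\otimes_F A \to \cG\otimes_F A$ is exactly a compatible system of sections of the truncations. Given a compatible system $([s_N])_N$, I would choose representatives inductively: the reduction of any representative of $[s_{N+1}]$ is $(\cP/W_{-N}\cP)(A)$-conjugate to the already-chosen $s_N$, that conjugation lifts along $(\cP/W_{-N-1}\cP)(A) \to (\cP/W_{-N}\cP)(A)$ (the kernel is a vector group, so $A$-points surject), and applying the lift changes neither the class of $s_{N+1}$ nor $s_N$. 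The resulting compatible system glues to a section $s$ with $[s \bmod W_{-N}\cP]=[s_N]$ for all $N$.

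The substance is injectivity, and here I would pass to the canonical representatives produced in the proof of Theorem~\ref{thm:representability}. Fix a lift $\what : \Gm \to \cGhat_\phi$ of $\w$, with reductions $\what_N$; that proof identifies $\Hnab^1(\cG,\cP/W_{-N}\cP)(A)$ with the set of sections of $(\cGhat_\phi/W_{-N}\cP)\otimes_F A \to \cG\otimes_F A$ that are invariant under the conjugation action through $\what_N$, the transition maps being reduction, so (using $\cGhat_\phi=\varprojlim_N \cGhat_\phi/W_{-N}\cP$ once more) the inverse limit over $N$ is the set of $\what$-invariant sections of the full extension. It therefore suffices to prove the prounipotent analogue of the assertion following the theorem: every $\cP(A)$-orbit of sections of $\cGhat_\phi\otimes_F A \to \cG\otimes_F A$ contains a \emph{unique} $\what$-invariant section. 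Uniqueness is immediate from the finite-level uniqueness (two $\what$-invariant sections agreeing modulo every $W_{-N}\cP$ coincide). For existence I would run the improvement procedure from the proof of the theorem without truncating: it corrects a section one weight at a time by conjugating with an element that lies one weight deeper than the previous correction --- each step using only the finiteness condition (\ref{eqn:finiteness}) at the current weight and leaving the earlier arrangements intact --- so the resulting infinite product converges in $\cP(A)=\varprojlim_N (\cP/W_{-N}\cP)(A)$, being eventually constant modulo each $W_{-N}\cP$. The main obstacle is thus not any single estimate but the bookkeeping that these successive corrections stay mutually consistent and assemble into a genuine element of $\cP(A)$; this is precisely what the negativity of the weights and the separatedness of the weight filtration deliver.
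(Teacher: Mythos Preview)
The paper does not supply a proof of this corollary; it is stated immediately after Theorem~\ref{thm:representability} and left as an evident consequence of passing to the inverse limit. Your argument is correct and is precisely the one the paper's machinery suggests: you use the $\what$-invariant sections from the proof of Theorem~\ref{thm:representability} (equivalently, Corollary~\ref{cor:principal} and Lemma~\ref{lem:triv_cent}) as canonical representatives, so that the inverse limit of the conjugacy-class functors is computed as the inverse limit of sets of $\what$-graded sections, which in turn is the set of $\what$-graded sections of the full extension.

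One small inaccuracy worth flagging: the finiteness condition~(\ref{eqn:finiteness}) plays no role in the step where you conjugate a section into its $\what$-invariant form. That step is governed entirely by Corollary~\ref{cor:appendix} and Lemma~\ref{lem:triv_cent}, which need only that the weights on $\n$ are negative. Finiteness enters only to ensure that each truncated $\Hnab^1(\cG,\cP/W_{-N}\cP)$ is of finite type, hence that the inverse limit is a genuine affine scheme rather than merely an ind-scheme. Your argument goes through unchanged once you drop that parenthetical. You might also observe that the compatibility of the conjugating elements $u_N$ across levels can be read off directly from uniqueness together with Lemma~\ref{lem:triv_cent} (the stabiliser of any section under $\cP/W_{-N}\cP$-conjugation is trivial), which slightly streamlines the convergence bookkeeping you describe.
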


There is an ``exact sequence'' which aids in the computation of
$\Hnab^1(\cG,\cP)$.

\begin{bigtheorem}[{cf.\ \cite[p.~641]{kim:coho}}]
\label{thm:exactness}
Suppose that $N>1$. If the finiteness condition (\ref{eqn:finiteness}) is
satisfied  when $1\le m \le N$, then
\begin{enumerate}

\item there is a morphism
$\delta : \Hnab^1(\cG,\cP/W_{-N}) {\to} H^2(\g,\Gr^W_{-N}\p)^\Rbar$
of $F$-schemes,

\item there is a principal action of $H^1(\g,\Gr^W_{-N}\p)^\Rbar$ on
$\Hnab^1(\cG,\cP/W_{-N-1})$,

\item $\Hnab^1(\cG,\cP/W_{-N})(A)$ is a principal
$H^1(\g,\Gr^W_{-N}\p)(A)^\Rbar$ set over the set of $A$-rational points
$(\delta^{-1}(0))(A)$ of the scheme $\delta^{-1}(0)$ for all $F$-algebras $A$.

\end{enumerate}
\end{bigtheorem}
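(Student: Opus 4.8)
The plan is to run the standard "layer-by-layer" obstruction-theory argument for the tower $\cP/W_{-N-1}\cP \to \cP/W_{-N}\cP$, but carried out functorially in the $F$-algebra $A$ so that it assembles into morphisms of the representing schemes of Theorem~\ref{thm:representability}. The key point is the central extension
\begin{equation*}
1 \to \Gr^W_{-N}\cP \to \cP/W_{-N-1}\cP \to \cP/W_{-N}\cP \to 1
\end{equation*}
of prounipotent groups, on which $\cG$ acts (via $\phi$ and the weight filtration), with $\Gr^W_{-N}\p$ a finite-dimensional $\cG$-module annihilated by $\U$ — hence an $R$-module, and by negativity of weights actually a module on which $\w$ acts with weight $-N$. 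Pushing out (\ref{eqn:extn-phi}) for $N+1$ along $\cGhat_\phi/W_{-N-1}\cP \to \cGhat_\phi/W_{-N}\cP$ and using that $\Gr^W_{-N}\cP$ is central, one gets, for a section $s$ of $(\cGhat_\phi/W_{-N}\cP)\otimes A \to \cG\otimes A$, a class in $H^2$ measuring the obstruction to lifting $s$ to a section one level up. Because the coefficient module is $\p$-fixed (it is central in the kernel), the relevant $H^2$ is the continuous Lie algebra cohomology $H^2(\g,\Gr^W_{-N}\p)$, and because two $\cP(A)$-conjugate sections differ by an inner automorphism that acts trivially on the central graded piece, this class depends only on the $\cP(A)$-conjugacy class of $s$; taking $\Gm$-invariants (equivalently, using the canonical $\Gm$-invariant section provided by a lift $\what$ of $\w$) cuts everything down to the $\Rbar$-invariants, giving the morphism $\delta$ of part~(i). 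I would first check that $\delta$ is a morphism of schemes by exhibiting it on $A$-points functorially and invoking representability; the finiteness hypothesis for $m=N$ guarantees $H^2(\g,\Gr^W_{-N}\p)^\Rbar$ is finite-dimensional, so it is an honest affine scheme of finite type.

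Next I would establish the principal action in part~(ii). Given a section $\stilde$ of $(\cGhat_\phi/W_{-N-1}\cP)\otimes A \to \cG \otimes A$ and a $1$-cocycle $c \in Z^1(\g,\Gr^W_{-N}\p)(A)$, the product $c\cdot\stilde$ is the section obtained by multiplying $\stilde$ pointwise by $c$ (using centrality of $\Gr^W_{-N}\cP$ inside $\cP/W_{-N-1}\cP$, the cocycle condition is exactly what makes $c\cdot\stilde$ again a homomorphism). Modifying $\stilde$ by a $\cP(A)$-conjugation changes $c$ by the coboundary coming from the induced inner automorphism on the central piece — here one uses that conjugation by $W_{-N}\cP$ shifts $\stilde$ by a coboundary, while conjugation by the lower-weight part acts trivially on $\Gr^W_{-N}\p$ — so the action descends to a well-defined action of $H^1(\g,\Gr^W_{-N}\p)(A)$, and its $\Gm$-equivariant/$\Rbar$-invariant part acts on $\Hnab^1(\cG,\cP/W_{-N-1})$. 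That this action is algebraic and principal (free) again follows from functoriality plus representability and from triviality of center of $\cP$: if $c\cdot[\stilde]=[\stilde]$ then the modifying conjugation must centralize $\stilde$ modulo $W_{-N-1}$, forcing $c$ to be a coboundary.

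For part~(iii) I would verify the exactness of the "sequence" by a direct orbit count on $A$-points: a section $s$ of level $N$ lifts to level $N+1$ over $A$ iff $\delta([s])=0$ in $H^2(\g,\Gr^W_{-N}\p)(A)^\Rbar$, which is the statement that the fiber of $\Hnab^1(\cG,\cP/W_{-N})(A) \to H^2(\ldots)(A)^\Rbar$ over $0$ is exactly the image of $\Hnab^1(\cG,\cP/W_{-N-1})(A)$; and two level-$(N+1)$ lifts of the same $[s]$ differ by a unique class in $H^1(\g,\Gr^W_{-N}\p)(A)^\Rbar$, which is the freeness and transitivity of the action of (ii) on those fibers. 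Surjectivity onto $(\delta^{-1}(0))(A)$ is where the central cocharacter does its real work: a section $s$ with vanishing obstruction has a $\Gm$-invariant representative (Theorem~\ref{thm:representability} and the remark following it), and the obstruction class, being $\Gm$-invariant of weight $-N \ne 0$, must vanish in $\Gr^W_{-N}\p$ — so one can solve the lifting equation $\Gm$-equivariantly, producing a canonical lift. I expect the main obstacle to be purely bookkeeping: namely, checking carefully that all of these constructions — the obstruction class, the cocycle action, the descent through $\cP(A)$-conjugacy — are genuinely functorial in $A$ and respect the $\Gm$-actions, so that they glue to morphisms and actions of $F$-schemes rather than merely maps of $A$-point sets, and in particular that the scheme structure on $\delta^{-1}(0)$ is the right one (a closed subscheme, as the fiber of a morphism to a finite-dimensional vector group) for the principal-bundle statement to make sense.
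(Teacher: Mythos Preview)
Your outline is sound and would work, but it takes a genuinely different route from the paper. You run the classical obstruction theory at the group level---twist sections by group $1$-cocycles, measure lifting obstructions as $2$-cocycles, then descend through $\cP(A)$-conjugacy and check functoriality in $A$. The paper instead first invokes Corollary~\ref{cor:concrete}, which identifies $\Hnab^1(\cG,\cN)$ with the scheme $\Sect_\dot^R(\Gr^W_\dot\g,\Gr^W_\dot\n)$ of graded $R$-invariant \emph{Lie algebra} sections of $\Gr^W_\dot\ghat \to \Gr^W_\dot\g$, and then carries out the entire argument in that linearized setting. The connecting map $\delta$ is defined by the explicit bracket-failure cocycle $h_\sigma(x,y)=[\sigmatilde(x),\sigmatilde(y)]-\sigmatilde([x,y])$, where $\sigmatilde$ is the composite of $\sigma$ with the unique graded linear splitting of $\ghat_\dot \to \ghat_\dot/\z$; and the $H^1$-action is literally $\sigma \mapsto \sigma + f$ for $f\in Z^1_{(0)}(\g,\z)^R$. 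Freeness is then a one-line observation: since $\z=\Gr^W_{-N}\p$ has pure weight $-N$, one has $C^0_{(0)}(\g,\z)=\z_0=0$, so there are no graded $1$-coboundaries and $H^1_{(0)}=Z^1_{(0)}$. This linearization is precisely what dissolves the ``bookkeeping'' you flag as the main obstacle: the scheme structure, functoriality in $A$, and $\Gm$-equivariance are all automatic once one is working with polynomial equations among graded Lie algebra maps, and no descent through conjugacy is needed at all.

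Two small corrections. First, the finiteness hypothesis is only on $H^1$, and nothing in it forces $H^2(\g,\Gr^W_{-N}\p)^\Rbar$ to be finite dimensional; the paper does not claim this, and $\delta$ is a morphism to an affine space regardless. Second, your freeness argument appeals to ``triviality of center of $\cP$,'' but that hypothesis plays no role here; what is actually used (in either approach) is that $\Gr^W_{-N}\p$ has no weight-zero part, which kills $C^0_{(0)}$ and hence all graded $1$-coboundaries.
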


This ``exact sequence'' is represented by the diagram:
{\small
$$
\xymatrix{
\Hnab^1(\cG,\cP/W_{-N-1}) \save !L(.85)="Hnab";
"Hnab",\ar@(ul,dl)"Hnab"_{{H^1(\g,\Gr^W_{-N}\p)^\Rbar}}\restore
\ar[r]^-(.55)\pi &
\Hnab^1(\cG,\cP/W_{-N}) \ar[r]^(.55){\delta} & H^2(\g,\Gr^W_{-N}\p)^\Rbar
}
$$
}
in which $\pi$ is the projection that takes a section to its quotient by
$\Gr^W_{-N}\p$.

In our version of non-abelian cohomology, we do not need to specify a splitting
of the extension (\ref{eqn:extn-phi}), or even have one at all. Since this
extension may not split, $\Hnab^1(\cG,\cP)$ may be the empty scheme. Even when
the extension does split, there may be no preferred section;  these cohomology
groups are not pointed, as is customary. For this I make no apologies --- the
computations in \cite{hain:rat-pts} have convinced me that preferring any one
section over another is unnatural in many contexts.

\section{Weight Filtrations}
\label{sec:weights}

In this section, we review some basic facts from
\cite[\S3]{hain-matsumoto:weighted} about negatively weighted extensions and
their representations. As in the introduction, $R$ is a reductive $F$-group and
$\w : \Gm \to R$ is a non-trivial central cocharacter. Suppose that
\begin{equation}
\label{eqn:extn}
1 \to \U \to \cG \to R \to 1
\end{equation}
is negatively weighted extension of $R$ by a prounipotent group (in the category
of proalgebraic $F$-groups). Since $\U$ is prounipotent, Levi's Theorem implies
that (\ref{eqn:extn}) splits and that any two splittings are conjugate by an
element of $\U(F)$. Composing $\w$ with any such splitting gives a lift $\wtilde
: \Gm \to \cG$. Any two such splittings are conjugate by an element of $\U(F)$
as can be seen by applying Levi's Theorem to the pullback of (\ref{eqn:extn})
along $\w$.

Suppose that $V$ is a finite dimensional $\cG$-module. Each lift $\wtilde : \Gm
\to \cG$ of $\w$ to $\cG$ determines a splitting
$$
V = \bigoplus_{m\in \Z} V_m
$$
of $V$ into isotypical $\Gm$-modules, where $\Gm$ acts via $\wtilde$ on $V_m$ by
the $m$th power of its standard representation. We will call this decomposition
the {\em $\wtilde$-splitting} of $V$. It is preserved by homomorphisms $V \to
V'$ between finite dimensional $\cG$-modules.

Although the splitting of $V$ depends on $\wtilde$, the {\em weight filtration}
$$
0 = W_M V \subseteq W_{M+1} V \subseteq \cdots
\subseteq W_{N-1}V \subseteq W_N V = V,
$$
which is defined by
$$
W_n V := \bigoplus_{m\le n} V_m,
$$
does not. This is a filtration by $\cG$-submodules. Each choice of $\wtilde$
determines a natural isomorphism
$$
V_m \cong \Gr^W_m V := W_mV/W_{m-1}V.
$$
It follows that, for each $m\in \Z$, the functor $\Gr^W_m$ from the category of
finite dimensional $\cG$-modules to the category of $R$-modules is exact.

The definition of the weight filtration extends to the categories of ind- and
pro-objects of the category of finite dimensional $\cG$-modules. The functors
$\Gr^W_m$ remain exact.

We shall need the following useful property of negatively weighted extensions.
Define the {\em centralizer} of a lift $\wtilde$ of $\w$ to be the
$F$-group that represents the functor
$$
A \mapsto
\{u\in \U(A):u\wtilde(\lambda)=\wtilde(\lambda)u\text{ all }\lambda\in\Gm(A)\},
$$
from $F$-algebras to sets.

\begin{lemma}
\label{lem:triv_cent}
For each lift $\wtilde : \Gm \to \cG$ of $\w$ to $\cG$, the centralizer
$C(\wtilde)$ of $\wtilde$ is trivial.
\end{lemma}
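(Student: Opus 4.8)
The plan is to work with the Lie algebra $\u$ of $\U$ and exploit the negativity of the $\Gm$-action. Fix the lift $\wtilde$ and use the resulting $\wtilde$-splitting $\u = \bigoplus_{m} \u_m$, which by the negatively weighted hypothesis is concentrated in strictly negative weights: $\u = \bigoplus_{m<0}\u_m$, equivalently $\u = W_{-1}\u$. An element of $C(\wtilde)(A)$ is, after taking logarithms, an element $X \in \u(A)$ which is fixed by $\Ad\circ\wtilde(\lambda)$ for all $\lambda$; that is, $X$ lies in the weight-$0$ part of $\u(A)$. But the weight-$0$ part of $\u$ is zero, so $X = 0$ and hence the centralizing element is the identity. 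One subtlety: since $\U$ is prounipotent, I should phrase this at the level of each finite-dimensional quotient $\U/W_n\U$ (which is genuinely unipotent) and then pass to the inverse limit; on each such quotient the exponential is an isomorphism of schemes $\u/W_n\u \xrightarrow{\sim} \U/W_n\U$ intertwining the adjoint and conjugation actions of $\Gm$, so a conjugation-fixed point of $\U/W_n\U$ corresponds to a weight-$0$ vector in $\u/W_n\u$, of which there are none.

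Concretely, the key steps in order are: (1) reduce to the unipotent quotients $\U/W_n\U$ and note that $C(\wtilde)$ is the inverse limit of the centralizers $C_n(\wtilde) \subseteq \U/W_n\U$, so it suffices to show each $C_n(\wtilde)$ is trivial; (2) recall from the weight-filtration discussion that the adjoint action of $\Gm$ (through $\wtilde$) on $\u/W_n\u$ has all weights in the range $M \le m \le -1$, in particular no zero weight, because the extension is negatively weighted and $H_1(\U) = \u/[\u,\u]$ carries only negative weights (hence so does all of $\u$, being generated by $H_1(\U)$ under the bracket, which only makes weights more negative); (3) identify $C_n(\wtilde)(A)$, via $\log$, with $\{X \in (\u/W_n\u)(A) : \Ad(\wtilde(\lambda))X = X \ \forall \lambda\}$, i.e.\ the $A$-points of the weight-$0$ subspace $(\u/W_n\u)_0$; (4) conclude $(\u/W_n\u)_0 = 0$, so $C_n(\wtilde)$ is the trivial group scheme, and therefore so is $C(\wtilde)$.

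The only real point requiring care — not so much an obstacle as a place to be precise — is step (2)/(3): the functor-of-points definition of $C(\wtilde)$ quantifies over all $\lambda \in \Gm(A)$ for an arbitrary test algebra $A$, and I want to translate ``commutes with the image of $\wtilde$'' into ``has weight $0$''. This is exactly the statement that an element of a unipotent group over $A$ commuting with a $\Gm$-action decomposes trivially under that action; it follows formally because the $\Gm$-action on the coordinate ring grades it, the fixed-point subfunctor is cut out by the vanishing of all nonzero-weight components, and under $\log$ this matches the weight decomposition of $\u/W_n\u$. Since that decomposition has no weight-$0$ part by negativity, the fixed subfunctor is a point. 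Everything else is bookkeeping with the exactness properties of $\Gr^W_m$ recalled above.
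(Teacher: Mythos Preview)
Your proof is correct and follows essentially the same approach as the paper: take logarithms to pass to $\u$, observe that centralizing $\wtilde$ forces $\log u$ into the weight-$0$ summand $\u_0$, and invoke the negatively weighted hypothesis to conclude $\u_0=0$. Your version is more explicit about the reduction to finite-dimensional quotients and about why the negative weights on $H_1(\u)$ propagate to all of $\u$, but the underlying argument is the same.
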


\begin{proof}
Fix a lift $\wtilde$ of $\w$. Use this to split the weight filtration of $\u$,
the Lie algebra of $\U$. If $u\in \U(A)$ and
$u\wtilde(\lambda)=\wtilde(\lambda)u$ for all $\lambda \in \Gm(A)$, then
$$
\log u = \wtilde(\lambda)(\log u)\wtilde(\lambda)^{-1}
$$
for all $\lambda \in \Gm(F)$. This implies that $\log u \in \u_0$, the weight 0
factor $\u_0$ of the $\wtilde$-decomposition of $\u$. But since $\cG$ is a
negatively weighted extension of $R$, $\Gr^W_0 \u = 0$, from which it follows
that $\log u=0$ and that $u=1$.
\end{proof}

\section{Cohomology}

\subsection{Cohomology of algebraic groups}

We begin with a quick review of the theory of cohomology of affine algebraic
groups in characteristic zero. The basic results are due to Hochschild
\cite{hochschild} and Hochschild and Serre \cite{hochschild-serre}. More recent
references include Jantzen's book \cite{jantzen}. All algebraic groups
considered will be reduced.

Suppose that $G$ is an affine algebraic group over a field $F$ of characteristic
zero. Denote the Lie algebra of $G$ by $\g$ and its enveloping algebra by $U\g$.
Denote the category of rational representations of $G$ by $\Rep(G)$, and the
category of $U\g$-modules by $\Rep(U\g)$. Every $G$-module is a direct limit of
its finite dimensional submodules. For objects $V$ of $\Rep(G)$ and $M$ of
$\Rep(U\g)$, define
$$
H^\dot(G,V) = \Ext_{\Rep(G)}^\dot(F,V) \text{ and }
H^\dot(\g,M) = \Ext_{\Rep(U\g)}^\dot(F,M),
$$
where $F$ is regarded as a trivial-module. Note that if $G$ is reductive, then
$H^\dot(G,V)$ vanishes in positive degrees and is the set of invariant vectors
$V^G$ in degree $0$.

Since the category $\Rep(G)$ has enough injectives
(\cite{hochschild},\cite[p.~43]{jantzen}), and since $\Rep(U\g)$ has enough
projectives, one can approach the computation of the cohomology of $G$ and $\g$
using standard homological methods.

The natural functor $\Rep(G) \to \Rep(U\g)$ induces a homomorphism $H^\dot(G,V)
\to H^\dot(\g,V)$. It is not, in general, an isomorphism. The difference between
the two is controlled by the de~Rham cohomology of $G$, whose definition we now
recall. The de~Rham complex of $G$ is
$$
\Omega^\dot(G) \cong \Hom_F(\Lambda^\dot\g,\O(G)),
$$
where $\O(G)$ denotes the coordinate ring of $G$. It has a natural differential
$d$. The de~Rham cohomology of $G$ is defined by
$$
H_\dr^\dot(G) := H^\dot(\Omega^\dot(G),d).
$$
When $F$ is a subfield of $\C$, Grothendieck's algebraic de~Rham theorem
\cite{grothendieck} implies that $H^\dot_\dr(G)\otimes_F\C$ is isomorphic to the
singular cohomology (with complex coefficients) of the corresponding complex Lie
group.

\begin{theorem}[{Hochschild \cite[Thm.~5.3]{hochschild}}]
If $G$ is a connected affine algebraic group, then for all rational $G$-modules
$V$, there is an $F$-algebra isomorphism
$$
H^k(\g,V) \cong \bigoplus_{i+j=k} H^i_\dr(G)\otimes_F H^j(G,V)
$$
for all $k\ge 0$. In particular, the natural mapping $H^\dot(G,V) \to
H^\dot(\g,V)$ is injective.
\end{theorem}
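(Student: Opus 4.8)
The plan is to interpolate between the two cohomology theories by a first-quadrant spectral sequence with
$$
E_2^{i,j} \cong H^i_\dr(G)\otimes_F H^j(G,V)
$$
abutting to $H^\dot(\g,V)$, and then to prove that it degenerates at $E_2$. The only inputs beyond standard homological algebra in $\Rep(G)$ and $\Rep(U\g)$ are: that $\Omega^\dot(G)=\Hom_F(\Lambda^\dot\g,\O(G))$ is the Chevalley--Eilenberg complex of $\g$ with coefficients in $\O(G)$ (so $H^\dot(\Omega^\dot(G),d)=H^\dot(\g,\O(G))=H^\dot_\dr(G)$); that $\O(G)\otimes_F W$ is an acyclic $G$-module with invariants $W$ for every $G$-module $W$; and that $G$ acts trivially on the finite-dimensional spaces $H^\dot_\dr(G)$ because $G$ is connected (in particular $H^0_\dr(G)=F$).

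I would fix an injective resolution $V\to I^\dot$ in $\Rep(G)$ and form the double complex $C^{p,q}=\big(\Omega^p(G)\otimes_F I^q\big)^G$, with horizontal differential the de~Rham differential with coefficients in $I^q$ (which is $G$-equivariant, hence passes to invariants) and vertical differential induced by $I^\dot$. Taking cohomology first in the $q$-direction: for each $p$ the complex $\Omega^p(G)\otimes_F I^\dot$ is an injective resolution of $\Omega^p(G)\otimes_F V\cong\O(G)\otimes_F(\Lambda^p\g^*\otimes_F V)$, which has no higher $G$-cohomology and invariants $\Lambda^p\g^*\otimes_F V$; so this spectral sequence collapses onto the row $q=0$, which one identifies there with the Chevalley--Eilenberg complex of $\g$ with coefficients in $V$. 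Hence the total complex computes $H^\dot(\g,V)$.

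Running the other spectral sequence, take cohomology first in the $p$-direction. For fixed $q$, $C^{\dot,q}$ is the $G$-invariants of the Chevalley--Eilenberg complex of $\g$ with coefficients in the $G$-module $\O(G)\otimes_F I^q$; a shearing isomorphism identifies the latter $G$-module with $\O(G)\otimes_F I^q$ where $G$ acts only on the first factor, so the $\g$-cohomology is $H^\dot_\dr(G)\otimes_F I^q$, with $G$ acting trivially on the first tensor factor. Thus each of these Chevalley--Eilenberg complexes is a bounded-below complex of injective $G$-modules whose cohomology is again injective in every degree, hence is homotopy equivalent, as a complex of $G$-modules, to the direct sum of its cohomology objects placed in their degrees; applying $(-)^G$ and using $H^0_\dr(G)=F$ yields $E_1^{p,q}=H^p_\dr(G)\otimes_F(I^q)^G$ with $d_1$ the differential of $(I^\dot)^G$, so $E_2^{i,j}=H^i_\dr(G)\otimes_F H^j(G,V)$ as claimed.

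The hard part is the degeneration at $E_2$. For $V=F$ I would choose $I^\dot$ to be a (homotopy-)commutative resolution, making the total complex a DGA and the spectral sequence multiplicative; then $E_2=H^\dot_\dr(G)\otimes_F H^\dot(G,F)$ is generated as an algebra by its two edges $H^\dot_\dr(G)\otimes F$ and $F\otimes H^\dot(G,F)$, and $d_r$ ($r\ge2$) annihilates the first edge by the quadrant shape and the second edge because those classes are permanent cycles --- equivalently because the natural map $H^\dot(G,F)\to H^\dot(\g,F)$, which is the $i=0$ edge homomorphism, is injective (checked directly for $\Ext^1$ of a connected $G$: a $\g$-equivariant splitting of an extension is given by a $\g$-invariant, hence $G$-invariant, vector, so the extension splits $G$-equivariantly; then dimension-shift). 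Degeneration for general $V$ then follows from the module structure of the spectral sequence over the one for $V=F$. An alternative, which sidesteps the multiplicative bookkeeping, is to pick a Levi decomposition $G=R\ltimes U$: the theorem is immediate for $R$ reductive (both sides reduce to invariants and $H^\dot_\dr(R)=H^\dot(\r)$), it holds for $U$ unipotent since there $H^\dot_\dr(U)=F$ and $H^\dot(\u,-)=H^\dot(U,-)$, and the general connected case follows by feeding these into the Hochschild--Serre spectral sequences of $1\to U\to G\to R\to 1$ and $0\to\u\to\g\to\r\to0$ --- whose degeneration in this split, reductive-quotient situation is then the essential point. In either approach, once the degeneration (and the resulting splitting of the abutment filtration) is established, one reads off the isomorphism; its multiplicativity when $V$ is a $G$-algebra comes from the DGA structure on the total complex, and the injectivity of $H^\dot(G,V)\to H^\dot(\g,V)$ is the $i=0$ summand, since $H^0_\dr(G)=F$.
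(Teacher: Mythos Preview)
The paper does not supply a proof of this theorem: it is stated with a citation to Hochschild's original paper \cite[Thm.~5.3]{hochschild} and used as a black box. So there is nothing in the paper to compare your proposal against.

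That said, your double-complex construction is essentially the argument Hochschild gives, and the identification of the two $E_2$-pages is carried out correctly. The weak point is the multiplicative degeneration argument. You have the roles of the two edges confused: with your indexing $E_2^{i,j}=H^i_\dr(G)\otimes H^j(G,F)$ and $d_r$ going in the direction dictated by having taken $p$-cohomology first, it is the $H^\bullet(G,F)$-edge that lies in the column $i=0$ and is automatically a permanent cycle by the quadrant shape, while it is the $H^\bullet_\dr(G)$-edge that needs an argument. More seriously, your proposed bootstrap ``checked directly for $\Ext^1$ \dots\ then dimension-shift'' does not go through as stated: dimension-shifting the injectivity of $H^\bullet(G,F)\to H^\bullet(\g,F)$ would require embedding into an object acyclic for both $G$ and $\g$, but a $G$-injective such as $\O(G)$ has $H^\bullet(\g,\O(G))=H^\bullet_\dr(G)$, which is not concentrated in degree $0$.

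Your alternative route via a Levi decomposition $G=R\ltimes U$ is the sound one, and is also how Hochschild organizes the argument: for $R$ reductive the statement is immediate, for $U$ unipotent both sides agree (this is exactly Corollary~\ref{cor:unipotent} of the present paper), and the two Hochschild--Serre spectral sequences for the extensions $1\to U\to G\to R\to 1$ and $0\to\u\to\g\to\r\to 0$ then assemble the general case. I would drop the multiplicative-degeneration paragraph and run that argument instead.
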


There are several notable special cases.

\begin{corollary}
If $G$ is a connected reductive group, then for all rational $G$-modules
$$
H^\dot(\g,V) \cong H^\dot_\dr(G)\otimes V^G.
$$
In particular, if $V$ does not contain the trivial representation, then
$H^\dot(\g,V)=0$.
\end{corollary}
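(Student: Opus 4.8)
The plan is to derive this as an immediate consequence of Hochschild's theorem stated just above. First I would record the cohomology of a connected reductive group with rational coefficients: since $F$ has characteristic zero and $G$ is reductive, the category $\Rep(G)$ is semisimple, so for any rational $G$-module $V$ we have $\Ext^j_{\Rep(G)}(F,V)=0$ for $j>0$, while $\Ext^0_{\Rep(G)}(F,V)=\Hom_G(F,V)=V^G$. Thus $H^j(G,V)=0$ for $j>0$ and $H^0(G,V)=V^G$, exactly as noted in the review above.

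Next I would substitute this into the isomorphism $H^k(\g,V)\cong\bigoplus_{i+j=k}H^i_\dr(G)\otimes_F H^j(G,V)$ furnished by Hochschild's theorem. Every summand on the right with $j>0$ vanishes, so only the $j=0$ term survives, giving $H^k(\g,V)\cong H^k_\dr(G)\otimes_F V^G$; assembling over all $k\ge 0$ yields $H^\dot(\g,V)\cong H^\dot_\dr(G)\otimes_F V^G$. For possibly infinite-dimensional $V$ one writes $V$ as the direct limit of its finite-dimensional $G$-submodules and uses that $\Ext_{\Rep(U\g)}^\dot(F,-)$ commutes with filtered colimits (here $F$ is finitely presented as a $U\g$-module), which is routine.

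The last assertion follows formally: if $V$ contains no copy of the trivial representation, then $V^G=0$ — a nonzero $G$-invariant vector would span a trivial subrepresentation — so $H^\dot(\g,V)=0$ by the displayed isomorphism. I do not expect a genuine obstacle here; the only points requiring care are the complete reducibility of $\Rep(G)$ for $G$ connected reductive in characteristic zero and the passage from finite-dimensional to arbitrary coefficients, both standard. One could alternatively avoid Hochschild's theorem and argue directly, decomposing a rational $G$-module as a $\g$-module into $V^G$ and a part with no $\g$-invariants and invoking the vanishing $H^\dot(\g,M)\cong H^\dot(\g,F)\otimes_F M^\g$ for reductive $\g$, but citing Hochschild's theorem is the most economical route.
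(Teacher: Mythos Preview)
Your proposal is correct and matches the paper's approach: the corollary is stated without proof as an immediate special case of Hochschild's theorem, using the fact (already recorded in the paper) that for reductive $G$ one has $H^j(G,V)=0$ for $j>0$ and $H^0(G,V)=V^G$. Your extra remarks on the passage to infinite-dimensional $V$ and the alternative direct argument are fine but go beyond what the paper bothers to say.
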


Since the de~Rham cohomology of a unipotent group vanishes, we have:

\begin{corollary}
\label{cor:unipotent}
If $G$ is unipotent, then for all rational $G$-modules $V$, there is a natural
isomorphism $H^\dot(G,V) \cong H^\dot(\g,V)$.
\end{corollary}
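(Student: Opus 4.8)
The plan is to read this off from Hochschild's theorem together with the vanishing of the de~Rham cohomology of a unipotent group. Since a unipotent group in characteristic zero is connected, the theorem above applies: for every rational $G$-module $V$ it gives an $F$-algebra isomorphism
$$
H^k(\g,V)\cong\bigoplus_{i+j=k}H^i_\dr(G)\otimes_F H^j(G,V),
$$
and the proof identifies the natural map $H^\dot(G,V)\to H^\dot(\g,V)$ with the inclusion of the summand $H^0_\dr(G)\otimes_F H^\dot(G,V)$. So it is enough to show $H^0_\dr(G)=F$ and $H^i_\dr(G)=0$ for $i>0$: then every summand with $i>0$ drops out, the surviving summand is exactly $H^\dot(G,V)$, and the natural map is forced to be an isomorphism. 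The resulting isomorphism is natural in $V$ because the map $H^\dot(G,V)\to H^\dot(\g,V)$ is.

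Thus the whole content is the computation $H^\dot_\dr(G)\cong F$, concentrated in degree $0$. I would deduce this from the fact that over a field of characteristic zero the exponential map $\exp:\g\to G$ is an isomorphism of varieties: the Baker--Campbell--Hausdorff formula renders the group law on $G$ polynomial in linear coordinates on $\g$, and $\exp$ and $\log$ are then mutually inverse polynomial maps, so the variety underlying $G$ is an affine space $\Aff^n$. The complex $\Omega^\dot(G)\cong\Hom_F(\Lambda^\dot\g,\O(G))$ used to define $H^\dot_\dr(G)$ is the algebraic de~Rham complex of this variety, with left-invariant $1$-forms trivializing $\Omega^1_G$; hence $H^\dot_\dr(G)$ is the algebraic de~Rham cohomology of $\Aff^n$, which in characteristic zero is $F$ in degree $0$ and zero in positive degrees by the algebraic Poincar\'e lemma.

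The main obstacle is essentially just this last step --- matching the abstractly defined complex $\Hom_F(\Lambda^\dot\g,\O(G))$ with the de~Rham complex of the affine variety $G\cong\Aff^n$ and then invoking the Poincar\'e lemma. If one prefers to avoid quoting the Poincar\'e lemma for $\Aff^n$ directly, one can instead induct on $\dim G$ using a central series of $G$ whose successive subquotients are copies of the additive group $\mathbb{G}_a$: each quotient map is a $\mathbb{G}_a$-torsor over an affine base, hence trivial as a torsor of varieties, which builds up the isomorphism $G\cong\Aff^n$ one factor at a time, and a K\"unneth argument for algebraic de~Rham cohomology together with $H^\dot_\dr(\mathbb{G}_a)\cong F$ (concentrated in degree $0$) completes the induction. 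Either way the corollary follows.
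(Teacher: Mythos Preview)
Your argument is correct and follows exactly the paper's approach: the corollary is deduced from Hochschild's theorem together with the vanishing of $H^\dot_\dr(G)$ in positive degrees for unipotent $G$. The paper simply asserts this de~Rham vanishing as known, whereas you supply a justification via $G\cong\Aff^n$ and the algebraic Poincar\'e lemma; otherwise the proofs are identical.
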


Hochschild's theorem is clearly false when $G$ is not connected. To understand
the general case, we need the spectral sequence associated to a group extension.

Suppose that $N$ is a normal subgroup of $G$. Taking a $G$-module $V$ to its
$N$-invariants $V^N$ defines a functor $Q:\Rep(G) \to \Rep(G/N)$. It takes
injectives to injectives. Denote the category of $F$-vector spaces by $\Vec_F$.
Applying the Grothendieck spectral sequence to the diagram of functors
$$
\xymatrix{
\Rep(G) \ar[r]_(.45)Q \ar@/^1.5pc/[rrr]^{\Hom_G(F,\blank)} &
\Rep(G/N) \ar[rr]_(.55){\Hom_{G/N}(F,\blank)} && \Vec_F
}
$$
gives the spectral sequence for a group extension:

\begin{proposition}
If $N$ is a normal subgroup of $G$, then for all $G$-modules $V$, there is a
spectral sequence satisfying
$$
E^{s,t}_2 = H^s(G/N,H^t(K,V)) \Longrightarrow H^{s+t}(G,V).
$$
\end{proposition}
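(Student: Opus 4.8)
The plan is to apply Grothendieck's spectral sequence of a composite of functors to the factorization
$$
\Hom_G(F,\blank)\;=\;\Hom_{G/N}(F,\blank)\circ Q ,\qquad Q=(\blank)^N:\Rep(G)\longrightarrow\Rep(G/N),
$$
set up in the discussion preceding the statement. First I would verify the factorization itself: for a $G$-module $V$ one has $(V^N)^{G/N}=V^G$, hence a natural identification $\Hom_{G/N}(F,V^N)=\Hom_G(F,V)$. Both $Q$ and $\Hom_{G/N}(F,\blank)$ are left exact; the categories $\Rep(G)$ and $\Rep(G/N)$ have enough injectives (as recalled above); and $Q$ carries injectives to injectives (as noted above — concretely because $Q$ is right adjoint to the exact inflation functor $\Rep(G/N)\to\Rep(G)$). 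These are precisely the hypotheses of Grothendieck's theorem, which then produces a first-quadrant spectral sequence
$$
E_2^{s,t}=\bigl(R^s\Hom_{G/N}(F,\blank)\bigr)\!\bigl(R^tQ(V)\bigr)\;\Longrightarrow\;\bigl(R^{s+t}\Hom_G(F,\blank)\bigr)(V).
$$

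It then remains to identify the derived functors appearing. By definition $R^s\Hom_{G/N}(F,M)=\Ext^s_{\Rep(G/N)}(F,M)=H^s(G/N,M)$ and $R^n\Hom_G(F,V)=H^n(G,V)$, so the $E_2$-page and the abutment come out as claimed once we know $R^tQ(V)\cong H^t(N,V)$ as a $G/N$-module. For this last point I would take an injective resolution $V\to I^\dot$ in $\Rep(G)$, so that $R^tQ(V)$ is the $t$-th cohomology of the complex of $G/N$-modules $(I^\dot)^N$. Restricting to $N$, each $I^j$ is acyclic for the invariants functor $(\blank)^N:\Rep(N)\to\Vec_F$; granting this, $(I^\dot)^N$ computes $H^\dot(N,V)$ after applying the exact forgetful functor $\Rep(G/N)\to\Vec_F$, and the residual $G/N$-action is the usual one, which gives the desired identification.

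The only non-formal ingredient is the claim that restriction sends injective $G$-modules to $(\blank)^N$-acyclic $N$-modules (equivalently, the compatibility of the two ways of computing $R^tQ$), and I expect checking this to be the main point of the write-up, although it is standard: it follows from the description of injectives in $\Rep(G)$ as direct summands of modules of the form $\O(G)\otimes_F W$ with $W$ a trivial module, together with the fact that the left regular representation $\O(G)$ restricts to an injective — hence $(\blank)^N$-acyclic — $N$-module; see \cite{jantzen}. Everything else is the formal apparatus of the Grothendieck spectral sequence and the elementary identity $(V^N)^{G/N}=V^G$.
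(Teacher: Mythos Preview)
Your proposal is correct and follows exactly the paper's approach: the paper simply notes the factorization $\Hom_G(F,\blank)=\Hom_{G/N}(F,\blank)\circ Q$, records that $\Rep(G)$ has enough injectives and that $Q$ sends injectives to injectives, and then invokes the Grothendieck spectral sequence. You supply more detail than the paper does---in particular the identification $R^tQ\cong H^t(N,\blank)$ via acyclicity of restricted injectives---but this is elaboration of the same argument, not a different one.
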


Denote the identity component of $G$ by $G_o$ and the group of components
$G/G_o$ of $G$ by $\Gbar$. The conjugation action of $G$ on itself induces an
action on each of its cohomology groups $H^\dot(G,V)$. A standard argument shows
that this action is trivial. Likewise, the natural action of $\g$ on
$H^\dot(\g,M)$ is trivial, so that the adjoint action of $G_o$ on $H^\dot(\g,V)$
is also trivial. The adjoint action of $G$ on $H^\dot(\g,V)$ thus factors
through a $\Gbar$-action.

\begin{corollary}
For all $G$-modules $V$, the inclusion $G_o \hookrightarrow G$ induces an
$F$-algebra isomorphism $H^\dot(G,V) \cong H^\dot(G_o,V)^\Gbar$ and an inclusion
$H^\dot(G,V) \hookrightarrow H^\dot(\g,V)^\Gbar$.
\end{corollary}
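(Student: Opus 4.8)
The plan is to deduce both statements from the spectral sequence of the extension $1 \to G_o \to G \to \Gbar \to 1$ together with Hochschild's theorem. Since $\Gbar$ is a finite group and $\Char F = 0$, Maschke's theorem shows that $\Rep(\Gbar)$ is semisimple, so $\Ext^s_{\Rep(\Gbar)}(F,M) = 0$ for $s>0$ while $\Ext^0_{\Rep(\Gbar)}(F,M) = M^\Gbar$; in other words $H^s(\Gbar,M)=0$ for $s>0$ and $H^0(\Gbar,M)=M^\Gbar$ for every rational $\Gbar$-module $M$. Feeding $N = G_o$ into the extension spectral sequence of the preceding proposition, the $E_2$-page $E_2^{s,t} = H^s(\Gbar, H^t(G_o,V))$ is concentrated on the column $s=0$, so the spectral sequence degenerates and the edge map yields isomorphisms $H^t(G,V) \cong H^t(G_o,V)^\Gbar$ for all $t$. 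This edge map is precisely the restriction map induced by the inclusion $G_o \hookrightarrow G$, and the $\Gbar$-action appearing here is the one induced by conjugation of $G$ on its normal subgroup $G_o$ --- the same action that occurs in the target $H^\dot(\g,V)^\Gbar$ of the second assertion.

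For the algebra structure, I would use that restriction along the subgroup $G_o \hookrightarrow G$ is a homomorphism for cup products (equivalently, that the extension spectral sequence is multiplicative), so its image is a subalgebra of $H^\dot(G_o,V)$ and the degeneration above exhibits $H^\dot(G,V) \cong H^\dot(G_o,V)^\Gbar$ as an isomorphism of $F$-algebras. One also has to check that the image lands in the $\Gbar$-invariants a priori; this follows from the fact, recalled just above the statement, that the conjugation action of $G$ on $H^\dot(G,V)$ is trivial.

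For the second assertion, note that $G_o$ and $G$ share the Lie algebra $\g$, and that the natural transformation $H^\dot(-,V) \to H^\dot(\g,V)$ from group to Lie algebra cohomology commutes with restriction along $G_o \hookrightarrow G$. Since $G_o$ is connected, Hochschild's theorem identifies $H^k(\g,V)$ with $\bigoplus_{i+j=k} H^i_\dr(G_o) \otimes_F H^j(G_o,V)$ and identifies the natural map $H^\dot(G_o,V) \to H^\dot(\g,V)$ with the inclusion of the $i=0$ summand; in particular it is injective. This inclusion is $\Gbar$-equivariant, and because $\Rep(\Gbar)$ is semisimple the functor $(-)^\Gbar$ is exact, so it induces an inclusion $H^\dot(G_o,V)^\Gbar \hookrightarrow H^\dot(\g,V)^\Gbar$. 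Composing with the isomorphism $H^\dot(G,V) \cong H^\dot(G_o,V)^\Gbar$ of the first part, and using the compatibility of the natural maps just noted, gives the desired injection $H^\dot(G,V) \hookrightarrow H^\dot(\g,V)^\Gbar$, realized as the natural map.

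The content here is essentially formal once one has Hochschild's theorem and the semisimplicity of $\Rep(\Gbar)$; the only points requiring a little care are the identification of the various $\Gbar$-actions (conjugation on $H^\dot(G_o,V)$ versus the adjoint action on $H^\dot(\g,V)$, which must be seen to agree under the natural map) and the multiplicativity of the edge homomorphism. I expect the bookkeeping with the $\Gbar$-actions to be the main thing to pin down, though it is not deep.
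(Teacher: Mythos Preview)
Your argument is correct and is exactly the one the paper intends: apply the extension spectral sequence with $N=G_o$, use that $\Gbar$ is finite in characteristic zero so the $E_2$-page collapses to the column $s=0$, and then invoke Hochschild's theorem for the connected group $G_o$ together with exactness of $(-)^{\Gbar}$ to obtain the inclusion into $H^\dot(\g,V)^{\Gbar}$. The paper leaves this corollary unproved, but your write-up matches the implicit deduction from the preceding proposition and theorem; your remarks on multiplicativity and on identifying the $\Gbar$-actions are the right checks and go slightly beyond what the paper spells out.
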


The relationship between Lie algebra cohomology and group cohomology is closest
in degrees $\le 1$.

\begin{proposition}
\label{prop:h1}
If $V^{G_o} = 0$, then $H^1(G,V) = H^1(\g,V)^\Gbar$.
\end{proposition}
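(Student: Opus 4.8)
The plan is to factor the canonical inclusion $H^1(G,V)\hookrightarrow H^1(\g,V)^\Gbar$ furnished by the previous corollary through the connected group $G_o$ and then quote Hochschild's theorem. First I would record that the inclusion $H^1(G,V)\hookrightarrow H^1(\g,V)^\Gbar$ is, under the isomorphism $H^1(G,V)\cong H^1(G_o,V)^\Gbar$ of the corollary, nothing but the map on $\Gbar$-invariants induced by the restriction $H^1(G_o,V)\to H^1(\g,V)$ (all three maps being obtained by pulling back along $G_o\hookrightarrow G$ and along $\Rep(G_o)\to\Rep(U\g)$, so their compatibility is formal). Consequently it suffices to prove that $H^1(G_o,V)\to H^1(\g,V)$ is an isomorphism and that it is $\Gbar$-equivariant for the conjugation action; passing to $\Gbar$-invariants then gives the claim.

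Next I would apply Hochschild's theorem to the connected group $G_o$, whose Lie algebra is $\g$. In degree $1$ it yields
$$
H^1(\g,V)\;\cong\;\big(H^0_\dr(G_o)\otimes_F H^1(G_o,V)\big)\;\oplus\;\big(H^1_\dr(G_o)\otimes_F H^0(G_o,V)\big),
$$
with the first summand identified with the image of the natural map from $H^1(G_o,V)$. Since $G_o$ is connected we have $H^0_\dr(G_o)=F$, and since $V^{G_o}=0$ by hypothesis we have $H^0(G_o,V)=0$; hence the second summand vanishes and the natural map $H^1(G_o,V)\to H^1(\g,V)$ is an isomorphism.

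Finally I would note that this isomorphism is $\Gbar$-equivariant: the conjugation action of $G$ on $G_o$ and on $\g$ factors through $\Gbar$, and the maps $H^\dot(G_o,V)\to H^\dot(\g,V)$ together with Hochschild's decomposition are natural with respect to automorphisms of the pair $(G_o,V)$, so they intertwine these actions. An isomorphism of $\Gbar$-modules restricts to an isomorphism on invariants, so $H^1(G_o,V)^\Gbar\cong H^1(\g,V)^\Gbar$, and combining with $H^1(G,V)\cong H^1(G_o,V)^\Gbar$ shows that the canonical inclusion $H^1(G,V)\hookrightarrow H^1(\g,V)^\Gbar$ is onto, i.e.\ an equality. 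The only point requiring care — the ``main obstacle'' — is the $\Gbar$-equivariance in this last step: one must make sure that the Künneth-type identification of the degree-$1$ piece of $H^1(\g,V)$ with $H^1(G_o,V)$ respects the $G$-conjugation action rather than being merely an abstract isomorphism, which is where the naturality of Hochschild's theorem is invoked.
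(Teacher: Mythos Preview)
Your proof is correct and follows essentially the same route as the paper: apply Hochschild's theorem to the connected group $G_o$ to obtain $H^1(\g,V)\cong H^1(G_o,V)$ from $V^{G_o}=0$, and combine with the isomorphism $H^1(G,V)\cong H^1(G_o,V)^\Gbar$ from the preceding corollary. The paper's argument is the same two-line computation, only with the $\Gbar$-equivariance left implicit rather than spelled out as you do.
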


\begin{proof}
This is just a combination of the results above. The fact that $V^{G_o}=0$
implies that $H^1(\g,V) = H^0_\dr(G_o)\otimes H^1(G_o,V) = H^1(G_o,V)$, so that
$$
H^1(G,V) = H^1(G_o,V)^\Gbar = H^1(\g,V)^\Gbar.
$$
\end{proof}

Write $G$ as an extension
\begin{equation}
\label{eqn:gp_extn}
1 \to U \to G \to R \to 1
\end{equation}
where $R$ is reductive and $U$ is unipotent. Since $H^j(R,V)$ vanishes in
positive degrees, Corollary~\ref{cor:unipotent} and the spectral sequence of
the extension (\ref{eqn:gp_extn}) imply:

\begin{corollary}
\label{cor:gp_vs_lie}
For all $G$-modules $V$, there are natural isomorphisms
$$
H^\dot(G,V) \cong H^\dot(U,V)^R \cong H^\dot(\u,V)^R.
$$
\end{corollary}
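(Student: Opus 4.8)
The plan is to feed the extension (\ref{eqn:gp_extn}) into the spectral sequence of the previous proposition, exploit the vanishing of the higher cohomology of the reductive quotient $R$, and then pass from $U$ to $\u$ using Corollary~\ref{cor:unipotent}. Concretely, I would first apply the proposition to the normal subgroup $N=U$ of $G$, so that $G/N=R$: for every $G$-module $V$ there is a spectral sequence
$$
E_2^{s,t}=H^s\big(R,H^t(U,V)\big)\Longrightarrow H^{s+t}(G,V),
$$
where $R$ acts on $H^t(U,V)$ through the action of $G/U=R$ induced by conjugation on $U$ and by the given action on $V$. This action descends from $G$ to $R$ because, as recalled above, $U$ acts trivially on its own cohomology. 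Since $R$ is reductive, $H^s(R,M)=0$ for $s>0$ and $H^0(R,M)=M^R$ for every $R$-module $M$, so the $E_2$-page is concentrated in the column $s=0$; the spectral sequence therefore degenerates at $E_2$ and yields a natural isomorphism $H^n(G,V)\cong H^n(U,V)^R$ for all $n\ge 0$, which is the first of the asserted isomorphisms.

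For the second isomorphism, Corollary~\ref{cor:unipotent} gives, since $U$ is unipotent, a natural isomorphism $H^\dot(U,V)\cong H^\dot(\u,V)$ induced by the restriction functor $\Rep(U)\to\Rep(U\u)$. It remains to check that this isomorphism intertwines the conjugation action of $R$ on $H^\dot(U,V)$ with the adjoint action of $R$ on $H^\dot(\u,V)$ (tensored with its action on $V$). This follows because conjugation by an element of $G$ induces compatible automorphisms of the pair $(U,V)$ and of the pair $(U\u,V)$ that commute with the restriction functor, so the comparison map is $G$-equivariant; since it is trivial on the image of $U$ (inner automorphisms act trivially on both cohomologies), the action factors through $R$ on each side and the two $R$-actions correspond. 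Taking $R$-invariants of the natural isomorphism $H^\dot(U,V)\cong H^\dot(\u,V)$ then gives $H^\dot(U,V)^R\cong H^\dot(\u,V)^R$, completing the chain.

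The step I expect to require the most care is this last equivariance check: Corollary~\ref{cor:unipotent} is stated as an isomorphism natural in $V$, and one has to verify that the naturality is strong enough to cover the automorphisms coming from conjugation by $G$, so that restricting to $R$-invariants is legitimate. Once that is in hand, everything else is a formal consequence of the reductivity of $R$ and the degeneration of the spectral sequence.
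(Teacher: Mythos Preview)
Your argument is correct and is essentially the same as the paper's: the paper simply says that the result follows from the spectral sequence of the extension (\ref{eqn:gp_extn}), the vanishing of higher cohomology of the reductive quotient $R$, and Corollary~\ref{cor:unipotent}. Your write-up actually supplies more detail than the paper does, in particular the $R$-equivariance of the comparison $H^\dot(U,V)\cong H^\dot(\u,V)$, which the paper leaves implicit.
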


Now suppose that $\w : \Gm \to R$ is a central cocharacter.  Since unipotent
groups are connected, the group $\Rbar$ of components of $R$ is isomorphic to
$\Gbar$.

\begin{corollary}
If (\ref{eqn:gp_extn}) is negatively weighted with respect to $\w$ and if $V$ is
an $R$-module of weight $\neq 0$, then $H^1(G,V) \cong H^1(\g,V)^\Rbar$. This
vanishes when all weights of $V$ are non-negative.
\end{corollary}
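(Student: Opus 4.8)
The plan is to deduce the isomorphism from Proposition~\ref{prop:h1} and the vanishing from Corollary~\ref{cor:gp_vs_lie}, using in both cases that the weight grading on an $R$-module comes from the \emph{central} cocharacter $\w$.

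\textbf{The isomorphism.} First I would check that $V^{G_o}=0$; granting this, Proposition~\ref{prop:h1} gives $H^1(G,V)=H^1(\g,V)^{\Gbar}$, and since $U$ is connected we have $\Gbar\cong\Rbar$, so $H^1(G,V)\cong H^1(\g,V)^{\Rbar}$. To see $V^{G_o}=0$: the homomorphism $G\to\Rbar$ factors through $G\to R$ and has connected kernel (an extension of $R_o$ by $U$), so $G_o$ is the preimage of $R_o$ and maps onto $R_o$. As $V$ is inflated from $R$, the subgroup $U$ acts trivially on it, so $V^{G_o}=V^{R_o}$. Because $\w$ is central in $R$, its image lies in $R_o$ and commutes with the whole $R_o$-action; hence every $R_o$-fixed vector is fixed by $\w(\Gm)$, i.e.\ has weight $0$. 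Thus $V^{G_o}=V^{R_o}\subseteq V_0$, and the hypothesis that $V$ has weight $\neq 0$ says exactly that $V_0=0$.

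\textbf{The vanishing.} Suppose now that all weights of $V$ are non-negative. By Corollary~\ref{cor:gp_vs_lie}, $H^1(G,V)\cong H^1(\u,V)^R$, where $\u$ is the Lie algebra of $U$. Since $V$ is inflated from $R$ the action of $\u$ on $V$ is trivial, so $H^\dot(\u,V)\cong H^\dot(\u,F)\otimes V$ as $R$-modules, and $H^1(\u,F)\cong H_1(U)^*$. The extension (\ref{eqn:gp_extn}) being negatively weighted, all weights of $H_1(U)$ are negative, hence all weights of $H_1(U)^*$ are positive; tensoring with $V$, whose weights are $\ge 0$, gives a module $M:=H_1(U)^*\otimes V$ all of whose weights are positive. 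As before, the $\w$-grading on $M$ is preserved by $R$ since $\w$ is central, so $M^R$ is contained in the weight-$0$ summand of $M$, which is $0$. Hence $H^1(G,V)\cong M^R=0$, and then the isomorphism already proved gives $H^1(\g,V)^{\Rbar}=0$ as well.

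\textbf{Where the work is.} There is no real difficulty beyond bookkeeping; the one point to get right, and it is the same observation both times, is that centrality of $\w$ forces the $\w$-weight grading on any $R$-module --- and on any cohomology group built $R$-equivariantly from such modules --- to be $R$-stable, so that $R$-invariants sit in weight $0$. Given that, the negativity of the weights on $H_1(U)$ together with the weight hypothesis on $V$ does the rest.
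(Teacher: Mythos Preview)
Your proof is correct and follows the approach the paper implicitly intends by placing this corollary immediately after Proposition~\ref{prop:h1} and Corollary~\ref{cor:gp_vs_lie}: verify $V^{G_o}=0$ via the weight hypothesis to invoke the former, then use the latter together with the weight count on $H_1(U)^*\otimes V$ for the vanishing. The paper gives no explicit argument, so your write-up is precisely what a reader is expected to supply.
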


\subsection{Cohomology of proalgebraic groups}

Cohomology of pro-algebraic groups and pro-Lie algebras will be continuous
cohomology. More precisely, if $\cG = \varprojlim_\alpha G_\alpha$ is a
proalgebraic group, where each $G_\alpha$ is algebraic, and if $V$ is a finite
dimensional $G_\alpha$-module for $\alpha \ge \alpha_o$, then $H^\dot(\cG,V)$
is the ind-vector space
$$
H^\dot(\cG,V) := \varinjlim_{\alpha \ge \alpha_o} H^\dot(G_\alpha,V).
$$
Similarly with Lie algebra cohomology: if $\g = \varprojlim_\alpha \g_\alpha$,
where each $\g_\alpha$ is finite dimensional, and if $V$ is a finite dimensional
$\g_\alpha$-module for all $\alpha \ge \alpha_o$, then $H^\dot(\g,V)$ is the
ind-vector space
$$
H^\dot(\g,V) := \varinjlim_{\alpha \ge \alpha_o} H^\dot(\g_\alpha,V).
$$

The identity component $\cG_o$ of $\cG$ is the inverse limit of the identity
components of the $G_\alpha$. The group of components of a proalgebraic group is
an inverse limit of finite group schemes. All proalgebraic groups in this paper
will have a finite number of components.

Denote the Lie algebra of $G_\alpha$ by $\g_\alpha$. The Lie algebra $\g$ of
$\cG = \varprojlim G_\alpha$ is defined to be $\varprojlim \g_\alpha$. The
adjoint action of $\cG$ on $\g$ induces a $\cG$-action on $H^\dot(\g,V)$.
Proposition~\ref{prop:h1} implies:

\begin{lemma}
\label{lem:propgp-proplie}
Suppose that $\cG$ is a proalgebraic group whose group $\cGbar$ of components is
finite. If $V$ is a finite dimensional $\cG$-module, then the algebra
homomorphism
$$
H^\dot(\cG,V) \to H^\dot(\g,V)^\cGbar
$$
is injective in all degrees. If $V^{\cG_o}=0$, then this map induces natural
isomorphisms
$$
H^1(\cG,V) \cong H^1(\g,V)^\cGbar.
$$ 
\end{lemma}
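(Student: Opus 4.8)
The plan is to reduce the statement for the proalgebraic group $\cG$ to the
corresponding statement for the algebraic quotients $G_\alpha$, which is already
available in Proposition~\ref{prop:h1}, and then pass to the limit. Write $\cG =
\varprojlim_\alpha G_\alpha$ with each $G_\alpha$ algebraic. Since $V$ is finite
dimensional, the $\cG$-action on $V$ factors through some $G_{\alpha_o}$, and the
$\cG$-action on $H^\dot(\g,V)$, which factors through the finite group $\cGbar$,
may likewise be assumed to factor through the component group $\Gbar_\alpha$ of
$G_\alpha$ for all $\alpha \ge \alpha_o$ (enlarging $\alpha_o$ if necessary, using
finiteness of $\cGbar$). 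The first task is to check that the transition maps in
the directed system $\{H^\dot(G_\alpha, V)\}_{\alpha \ge \alpha_o}$, induced by
the surjections $G_\beta \twoheadrightarrow G_\alpha$, are $\Gbar$-equivariant
and compatible with the comparison maps $H^\dot(G_\alpha,V) \to
H^\dot(\g_\alpha,V)$; this is routine functoriality of the de~Rham-type
description. Taking the colimit over $\alpha$ then yields the commutative square
relating $H^\dot(\cG,V) \to H^\dot(\g,V)$ with the $G_\alpha$-level maps.

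For the injectivity assertion, I would invoke the Corollary preceding
Proposition~\ref{prop:h1} at each finite level: the inclusion $(G_\alpha)_o
\hookrightarrow G_\alpha$ induces an inclusion $H^\dot(G_\alpha,V)
\hookrightarrow H^\dot(\g_\alpha,V)^{\Gbar_\alpha}$. Filtered colimits are exact,
so passing to the limit preserves injectivity; one only needs that
$\varinjlim_\alpha H^\dot(\g_\alpha,V)^{\Gbar_\alpha} = \bigl(\varinjlim_\alpha
H^\dot(\g_\alpha,V)\bigr)^{\cGbar}$, which holds because the $\Gbar_\alpha$-action
is eventually the pullback of a single $\cGbar$-action and taking invariants
under a fixed finite group commutes with filtered colimits in characteristic
zero. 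This gives the first assertion.

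For the isomorphism in degree $1$ under the hypothesis $V^{\cG_o} = 0$, the key
point is to transfer this hypothesis to the finite levels: since $\cG_o =
\varprojlim_\alpha (G_\alpha)_o$ and $V$ is finite dimensional with trivial
$\cG_o$-invariants, there is an $\alpha_1 \ge \alpha_o$ with $V^{(G_\alpha)_o} =
0$ for all $\alpha \ge \alpha_1$ — indeed $V^{\cG_o} = \varinjlim_\alpha
V^{(G_\alpha)_o}$ along injections of subspaces of the fixed finite dimensional
$V$, so the colimit being zero forces each term to be zero eventually. For such
$\alpha$, Proposition~\ref{prop:h1} gives $H^1(G_\alpha, V) =
H^1(\g_\alpha,V)^{\Gbar_\alpha}$. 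Taking $\varinjlim_{\alpha \ge \alpha_1}$ of
both sides and using the two colimit-commutation facts above yields $H^1(\cG,V)
\cong H^1(\g,V)^{\cGbar}$.

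The main obstacle is bookkeeping rather than conceptual: one must be careful that
all three operations being interchanged with the filtered colimit — taking
cohomology, taking $(G_\alpha)_o$-invariants inside $V$, and taking
$\Gbar_\alpha$-invariants of the cohomology — are legitimate, and that the
directed system is chosen so that the finite group acting is genuinely constant
(namely $\cGbar$) past some index. All three interchanges are standard
(cohomology commutes with filtered colimits of coefficient-like data here because
everything in sight is finite dimensional or a colimit thereof, and invariants
under a fixed finite group commute with filtered colimits over a field of
characteristic zero), so once the indexing is set up correctly the argument is
immediate from the algebraic case.
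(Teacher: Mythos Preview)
Your proposal is correct and follows exactly the route the paper intends: the paper's ``proof'' is simply the phrase ``Proposition~\ref{prop:h1} implies:'' preceding the lemma, so you have supplied the limit argument the paper leaves implicit. One minor sharpening: once the action of $\cG$ on $V$ factors through $G_{\alpha_o}$, the containment $V^{(G_\alpha)_o}\subseteq V^{\cG_o}$ already holds for every $\alpha\ge\alpha_o$ (since the image of $\cG_o$ in $G_\alpha$ lies in $(G_\alpha)_o$), so $V^{\cG_o}=0$ forces $V^{(G_\alpha)_o}=0$ immediately rather than merely eventually.
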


Lie algebra cohomology will be computed using the standard Chevalley-Eilenberg
complex. In degree $j$, this is
$$
C^j(\g,V) := \Hom_\cts(\Lambda^j\g,V)
:= \varinjlim_\alpha \Hom(\Lambda^j\g_\alpha,V).
$$
Denote the $j$-cycles and $j$-boundaries by $Z^j(\g,V)$ and $B^j(\g,V)$,
respectively.
Now suppose that $\cG$ is an extension
\begin{equation}
\label{eqn:pro_extn}
1 \to \U \to \cG \to R \to 1
\end{equation}
of a reductive algebraic group $R$ by a prounipotent group $\U$. Denote the Lie
algebra of $\U$ by $\u$ and its abelianization by $H_1(\u)$. The group $\cGbar$
of components of $\cG$ is isomorphic to the group of components $\Rbar$ of $R$,
which is finite. The action of $\cG$ on $H^\dot(\g,V)$ induced by the adjoint
action induces an action of $\Rbar$ on $H^\dot(\g,V)$.

\begin{lemma}
If $V$ is an $R$-module satisfying $V^{R_o}=0$, then there are natural
isomorphisms
$$
H^1(\cG,V) \cong H^1(\g,V)^\Rbar \cong \Hom_R(H_1(\u),V).
$$
\end{lemma}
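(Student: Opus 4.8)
The plan is to establish the two isomorphisms in turn. The first is immediate from the results already assembled; the second, after stripping off the reductive quotient, reduces to a degree-one Chevalley--Eilenberg computation.

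For the first isomorphism, note that $\cG$ acts on $V$ through the projection $\cG \to R$, and that this projection carries $\cG_o$ onto $R_o$ (its kernel $\U$ is connected, and the induced map $\cGbar \to \Rbar$ on component groups is the isomorphism recalled above). Hence $V^{\cG_o} = V^{R_o} = 0$, and Lemma~\ref{lem:propgp-proplie} applies to give a natural isomorphism $H^1(\cG,V) \cong H^1(\g,V)^{\cGbar} = H^1(\g,V)^{\Rbar}$.

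For the second isomorphism I would pass through the cohomology of $\U$. Writing $\cG = \varprojlim_\alpha \cG/\U_\alpha$ over a cofinal family of normal subgroups $\U_\alpha \subseteq \U$ with $\U/\U_\alpha$ finite-dimensional, each $\cG/\U_\alpha$ is an extension of $R$ by the unipotent group $\U/\U_\alpha$, so Corollary~\ref{cor:gp_vs_lie} gives $H^\dot(\cG/\U_\alpha, V) \cong H^\dot(\u/\u_\alpha, V)^R$; passing to the colimit (which is exact and commutes with the exact functor of $R$-invariants) yields $H^1(\cG,V) \cong H^1(\u,V)^R$. Now the key simplification: since $V$ is pulled back from $R$, the Lie algebra $\u$ acts trivially on $V$, so in the Chevalley--Eilenberg complex $C^\dot(\u,V)$ the differential out of $C^0$ vanishes, and a $1$-cochain $f \in \Hom_\cts(\u,V)$ is a cocycle precisely when $f$ kills $[\u,\u]$. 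Therefore $H^1(\u,V) = \Hom_\cts(H_1(\u),V)$, and this identification is equivariant for the natural action of $R \cong \cG/\U$ (adjoint on $\u$, hence on $H_1(\u)$, together with the given action on $V$). Taking $R$-invariants gives $H^1(\u,V)^R = \Hom_R(H_1(\u),V)$, and combining with the first part produces the asserted chain of isomorphisms. Alternatively, one could obtain $H^1(\g,V)^{\Rbar} \cong \Hom_R(H_1(\u),V)$ directly from the five-term exact sequence of the Hochschild--Serre spectral sequence of $0 \to \u \to \g \to \r \to 0$, using that $H^s(\r,M)=0$ for all $s$ whenever $M^\r = 0$, since $\r$ is reductive.

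Since the individual steps are short, the real work is in the bookkeeping of group actions: one must check that the $R$-module structure on $H^1(\u,V)$ coming from the group-extension spectral sequence underlying Corollary~\ref{cor:gp_vs_lie} agrees with the evident one on $\Hom_\cts(\u,V)$, and that the $\cGbar$-action of Lemma~\ref{lem:propgp-proplie} is compatible with it under $\cGbar \cong \Rbar$, so that all the invariant subspaces in sight genuinely coincide. The other point needing a line of justification is that the cohomology of $\cG$ and the formation of $R$-invariants commute with the prounipotent limit; this is the main (mild) obstacle, and it is handled by the exactness properties recalled in Section~\ref{sec:weights} together with the reductivity of $R$.
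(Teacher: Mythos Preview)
Your proof is correct and follows essentially the same route as the paper's: apply Lemma~\ref{lem:propgp-proplie} (after checking $V^{\cG_o}=V^{R_o}=0$) for the first isomorphism, and the pro-version of Corollary~\ref{cor:gp_vs_lie} together with the identification $H^1(\u,V)=\Hom_\cts(H_1(\u),V)$ for the second; the paper's proof is terser but cites exactly these ingredients. One small quibble: your closing reference to Section~\ref{sec:weights} for the commutation with the inverse limit is misplaced---that section concerns weight filtrations, whereas the relevant fact is simply the definition of continuous cohomology as the direct limit $H^\dot(\cG,V)=\varinjlim_\alpha H^\dot(G_\alpha,V)$, together with the fact that taking $R$-invariants is exact and hence commutes with filtered colimits.
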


\begin{proof}
This follows directly from Corollary~\ref{cor:gp_vs_lie},
Lemma~\ref{lem:propgp-proplie} and the fact that $H^1(\u)$ is the continuous
dual of $H_1(\u)$ so that $H^1(\g,V) \cong \Hom_R^\cts(H_1(\u),V)$.
\end{proof}

Now suppose that the extension (\ref{eqn:pro_extn}) is negatively weighted with
respect to a cocharacter $\w : \Gm \to R$. Fix a lift $\wtilde : \Gm \to \cG$ of
$\w$ to $\cG$. The Lie algebras $\g$ of $\cG$ and $\u$ of $\U$ are projective
limits of $\cG$-modules via the adjoint action. Their $\wtilde$-gradings are
preserved  by the bracket. Since $\cG$ is a negatively weighted extension of
$R$, $\g_m = 0$ when $m>0$, $\g_0 \cong \r$, the Lie algebra of $R$, and $\g_m =
\u_m$ when $m<0$. The $\wtilde$-splittings of $V$ and $\g$ pass to the cochains:
$$
C^j(\g,V) = \bigoplus_{m\in \Z}C^j_{(m)}(\g,V).
$$
Since the bracket of $\g$ and the action $\g\otimes V \to V$ preserve the
splittings, the differential of $C^\dot(\g,V)$ also preserves it. Consequently,
the $\wtilde$-splitting passes to cohomology. Denote the weight $m$ summand of
the cohomology group $H^j(\g,V)$ by $H^j_{(m)}(\g,V)$. It is the cohomology of
the complex $C^\dot_{(m)}(\g,V)$.

\begin{lemma}
\label{lem:invar_coho}
If $V$ is a $\cG$-module, then $H^j(\g,V) = H^j_{(0)}(\g,V)$.
\end{lemma}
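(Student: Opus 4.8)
The plan is to recognize the $\wtilde$-splitting of cochains as the weight decomposition of a $\Gm$-action on the complex $C^\dot(\g,V)$ and then to show that the induced $\Gm$-action on cohomology is trivial. First I would observe that the adjoint action of $\cG$ on $\g$, together with the given action of $\cG$ on $V$, makes $C^\dot(\g,V)=\Hom_\cts(\Lambda^\dot\g,V)$ a complex of $\cG$-modules: the Chevalley--Eilenberg differential is assembled from the bracket $\Lambda^2\g\to\g$ and from the action map $\g\otimes V\to V$, both of which are $\cG$-equivariant, so $d$ is a morphism of $\cG$-modules. Restricting this $\cG$-action along $\wtilde:\Gm\to\cG$ yields a rational $\Gm$-action on $C^\dot(\g,V)$ whose weight-$m$ eigenspace is precisely the summand $C^\dot_{(m)}(\g,V)$ cut out by the $\wtilde$-splittings of $\g$ and of $V$; since $d$ preserves this grading, $\Gm$ acts on $H^j(\g,V)=\bigoplus_m H^j_{(m)}(\g,V)$ with $H^j_{(m)}(\g,V)$ the subspace on which $\lambda\in\Gm$ acts by $\lambda^m$.

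The essential step is that this $\Gm$-action on $H^\dot(\g,V)$ is trivial. The quickest route uses the fact recalled above that the $\cG$-action on $H^\dot(\g,V)$ arising from the adjoint and module actions factors through the finite component group $\cGbar$; since $\wtilde$ factors through the identity component $\cG_o$ (being a homomorphism out of the connected group $\Gm$), it must then act trivially on $H^\dot(\g,V)$. A rational $\Gm$-action is trivial exactly when all of its nonzero weight spaces vanish, so this forces $H^j_{(m)}(\g,V)=0$ for all $m\neq0$, that is, $H^j(\g,V)=H^j_{(0)}(\g,V)$. (Alternatively, triviality of the $\Gm$-action can be proved directly: let $E\in\g_0\cong\r$ be the infinitesimal generator of $\wtilde$, equivalently the grading operator of the $\wtilde$-splitting; Cartan's homotopy formula $L_E=\iota_E\circ d+d\circ\iota_E$ in the Chevalley--Eilenberg complex shows that the Lie derivative $L_E$---which generates the $\Gm$-action---acts as $0$ on $H^\dot(\g,V)$, whereas it acts on $H^j_{(m)}(\g,V)$ as multiplication by $m$.)

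The remaining points are routine, and I anticipate no genuine obstacle. In the pro-setting one first reduces to a finite-dimensional $\cG$-module $V$---any $\cG$-module is the directed union of such, and $H^\dot(\g,-)$ commutes with the resulting limit, compatibly with the weight grading because each finite-dimensional $\cG$-submodule is a sum of $\wtilde$-weight spaces---and then to a finite-dimensional quotient $\g_\alpha$ of $\g$ through which both the module structure on $V$ and the cocharacter $\wtilde$ factor, after which all the statements above concern honest finite-dimensional complexes; one passes back via $H^j(\g,V)=\varinjlim_\alpha H^j(\g_\alpha,V)$. The only verification requiring any care is the identification of the combinatorially defined splitting $C^\dot(\g,V)=\bigoplus_m C^\dot_{(m)}(\g,V)$ with the $\Gm$-eigenspace decomposition, which is immediate from the definitions of the $\wtilde$-splittings of $\g$ and of $V$. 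In effect, the content of the lemma is just that the $\wtilde$-grading is a torus action under which Lie algebra cohomology is, on general grounds, concentrated in weight zero.
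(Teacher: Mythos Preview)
Your proof is correct and follows essentially the same argument as the paper: the adjoint action of $\cG_o$ on the cochain complex induces the trivial action on $H^\dot(\g,V)$, and since $\wtilde$ factors through $\cG_o$ (being a homomorphism from the connected group $\Gm$), the induced $\Gm$-action on cohomology is trivial, forcing the nonzero weight spaces to vanish. Your write-up is more detailed than the paper's three-line proof---adding the alternative via Cartan's homotopy formula and the reduction to finite-dimensional quotients---but the core idea is identical.
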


\begin{proof}
The identity component $\cG_o$ of $\cG$ acts on the chain complex $C^\dot(\g,V)$
via the adjoint action. It therefore acts on its cohomology $H^\dot(\g,V)$.
Since this action is trivial, $\wtilde$ acts trivially on $H^\dot(\g,V)$.
\end{proof}

\section{Proofs of the Main Results}

\subsection{Setup}

The main results follow directly from results proved at the end of this section.
Here we consider an extension
\begin{equation}
\label{eqn:big_ext}
1 \to \cN \to \cGhat \to \cG \to 1
\end{equation}
of affine proalgebraic $F$-groups, where $\cN$ is unipotent and where $\cG$ is a
negatively weighted extension of a reductive group $R$. Then $\cGhat$ is an
extension of $R$ by a prounipotent group $\Uhat$, which is an extension
$$
1 \to \cN \to \Uhat \to \U \to 1.
$$
We will assume that $\cGhat$ is a negatively weighted extension of $R$ by
$\Uhat$. This implies that the Lie algebra $\n$ of $\cN$ has a natural weight
filtration satisfying $\n = W_{-1}\n$. Later, $\cN$ will be $\cP/W_{-N-1}$ and
$\cGhat$ will be the quotient of $\cGhat_\phi$ by $W_{-N-1}\cP$.

Denote the Lie algebras of $\cG$, $\cGhat$, $\U$ and $\Uhat$ by $\g$, $\ghat$,
$\u$, and $\uhat$, respectively.

\subsection{Sections}
Suppose that $A$ is an $F$-algebra. A section
$s$ of $\cGhat\otimes_F A\to \cG\otimes_F A$ induces a Lie algebra homomorphism
$ds : \ghat\otimes_F A \to \g\otimes_F A$.

\begin{lemma}
For all $F$-algebras $A$, the derivative map
$$
\{\text{sections of }\cGhat\otimes_F A \to \cG\otimes_F A\}
\to
\{\cGhat\text{-invariant sections of }\ghat\otimes_F A \to \g\otimes_F A\}
$$
is a bijection.
\end{lemma}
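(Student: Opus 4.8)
The plan is to build the inverse of $s\mapsto ds$ by working one weight graded layer of $\cN$ at a time, reducing everything to the group-versus-Lie comparisons proved above. First I would record that $s\mapsto ds$ is natural in $A$ and lands where claimed: differentiating $p\circ s=\id$ shows that $ds$ is a section of $dp$, and the invariance condition of the statement is obtained by differentiating $s(ghg^{-1})=s(g)\,s(h)\,s(g)^{-1}$ and passing to weight graded quotients, on which the adjoint action of $\cGhat$ factors through $R$ since $\Uhat$ acts trivially on $\Gr^W_\dot\ghat$. So the real issue is bijectivity. I would also note at the outset that everything below is compatible with base change: after fixing lifts $\wtilde\colon\Gm\to\cG$ and $\what\colon\Gm\to\cGhat$ of $\w$, the induced gradings split all the Chevalley--Eilenberg complexes in sight into finite dimensional pure weight pieces --- this is where the finiteness hypothesis~(\ref{eqn:finiteness}) enters --- so I may as well work over a fixed arbitrary base and suppress it from the notation.

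Next I would set up the d\'evissage. Since $\n=W_{-1}\n$ and $\cGhat$ is negatively weighted, $\cN$ carries the descending weight filtration $\cN=W_{-1}\cN\supseteq W_{-2}\cN\supseteq\cdots$ with each $W_{-j-1}\cN$ characteristic in $\cN$, hence normal in $\cGhat$, and with graded quotients the vector groups $V_j:=\Gr^W_{-j-1}\cN\cong\Gr^W_{-j-1}\n$, which are finite dimensional $R$-modules of pure weight $-j-1<0$. Setting $\cGhat^{(j)}:=\cGhat/W_{-j-1}\cN$ produces a tower
$$
\cdots\to\cGhat^{(j+1)}\to\cGhat^{(j)}\to\cdots\to\cGhat^{(0)}=\cG
$$
in which $\cGhat^{(j+1)}$ is an extension of $\cGhat^{(j)}$ by $V_j$, acted on through the reductive quotient $R$; the Lie algebras behave the same way. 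Both sides of the asserted bijection are inverse limits over $j$ of their analogues for $\cGhat^{(j)}\to\cG$, and $s\mapsto ds$ respects the tower, so it will be enough to show that bijectivity for $\cGhat^{(j)}\to\cG$ implies it for $\cGhat^{(j+1)}\to\cG$.

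For the layer step, fix a section $t$ of $\cGhat^{(j)}\to\cG$. Its lifts to $\cGhat^{(j+1)}$ form an empty set or a torsor under $Z^1(\cG,V_j)$, the obstruction to existence being the class $t^\ast c_j\in H^2(\cG,V_j)$ obtained by pulling back the extension class $c_j$ of $\cGhat^{(j+1)}\to\cGhat^{(j)}$; the same holds on the Lie side with $Z^1(\g,V_j)$ and $H^2(\g,V_j)$ in place of the group versions. Because $V_j$ has negative weights, $V_j^{\cG_o}=0$, so by Corollary~\ref{cor:gp_vs_lie} and Lemma~\ref{lem:propgp-proplie} the map $c\mapsto dc$ identifies $Z^1(\cG,V_j)$ with the cocycles in $Z^1(\g,V_j)$ of $\Rbar$-invariant class --- which is exactly the invariance condition of the statement, read off layer by layer --- and the map $H^2(\cG,V_j)\hookrightarrow H^2(\g,V_j)$ is injective. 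Together with the compatibility of $t\mapsto t^\ast c_j$ with differentiation, this gives ``$t$ lifts if and only if $dt$ lifts'' and a matching of the torsors of lifts; combined with the inductive hypothesis this yields the bijection for $\cGhat^{(j+1)}\to\cG$, and passing to the limit over $j$ completes the argument.

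The step I expect to be the main obstacle is the very last point of the layer step: verifying that the group-theoretic lifting obstruction $t^\ast c_j$ is carried by differentiation to the Lie-theoretic obstruction $(dt)^\ast c_j$, and that the injection $H^2(\cG,V_j)\hookrightarrow H^2(\g,V_j)$ survives base change so that ``$t$ lifts if and only if $dt$ lifts'' holds over every $A$. Base change is handled by the grading argument mentioned above; the compatibility of the connecting maps is where one has to look inside the constructions --- setting up the Hochschild-type resolutions for the group and Lie extensions so that the natural functor from $\cG$-modules to $\g$-modules intertwines their connecting homomorphisms --- rather than quote the cohomology comparisons as black boxes. Everything else is the torsor bookkeeping of the d\'evissage.
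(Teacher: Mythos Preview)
Your d\'evissage is a genuinely different route from the paper's, and a much longer one. The paper's proof is a two-line structural reduction: choose a Levi splitting of $\cGhat\to R$, so that $\cGhat\cong R\ltimes\Uhat$ and compatibly $\cG\cong R\ltimes\U$; sections of $\cGhat\to\cG$ then correspond to $R$-equivariant group homomorphism sections of $\Uhat\to\U$, and since $\Uhat$, $\U$ are prounipotent over a field of characteristic zero the exponential and logarithm give a bijection with $R$-equivariant Lie algebra sections of $\uhat\to\u$, which are exactly the invariant sections of $\ghat\to\g$. No filtration, no obstruction theory, no cohomology comparison enters.

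Two comments on your version. First, you invoke the finiteness hypothesis~(\ref{eqn:finiteness}), but the lemma neither states nor needs it; the paper's argument works for arbitrary $\cN$ and arbitrary $A$, and yours should too---the weight grading of the Chevalley--Eilenberg complex exists regardless of whether the pieces are finite dimensional. Second, the step you flag as the main obstacle (compatibility of the group and Lie $H^2$ obstruction classes under differentiation, and persistence of the injection $H^2(\cG,V_j)\hookrightarrow H^2(\g,V_j)$ after base change to $A$) is exactly the content the paper's $\exp/\log$ argument sidesteps; you would also need to check that the layer-by-layer invariance conditions you impose on the Lie side assemble to the global ``$\cGhat$-invariance'' of the statement, which is not automatic from the torsor bookkeeping alone. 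Your approach trades a short structural observation for a cohomological induction with real work left in it; it is instructive, but the paper's proof is the efficient one here.
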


\begin{proof}
Choose a splitting of the homomorphism $\cGhat \to R$. This induces an action of
$R$ on $\Uhat$ and an an isomorphism $\cGhat \cong R\ltimes \Uhat$ of
proalgebraic $F$-groups. The corresponding section of $\cG\to R$ induces a
compatible action of $R$ on $\U$ and an isomorphism $R\ltimes \U$. Sections of
$\cGhat \to \cG$ correspond to $R$-invariant sections of $\Uhat \to \U$. Since
$\Char F = 0$, the logarithm and exponential maps
$$ 
\xymatrix{
\uhat\otimes_F A \ar[r]_\exp & \Uhat(A)\ar@/_/[l]_\log &&
\u\otimes_F A \ar[r]_\exp & \U(A) \ar@/_/[l]_\log
}
$$
are bijections. It follows each of the maps
\begin{align*}
\{\text{sections of }\cGhat\otimes_F A \to \cG\otimes_F A\}
&\to
\{R\text{-invariant sections of }\Uhat\otimes_F A \to \U\otimes_F A\}
\cr
&\to
\{R\text{-invariant sections of }\uhat\otimes_F A \to \u\otimes_F A\}
\cr
&\to
\{\cGhat\text{-invariant sections of }\ghat\otimes_F A \to \g\otimes_F A\}
\end{align*}
is a bijection.
\end{proof}

Let $\what : \Gm \to \cGhat$ be a lift of the cocharacter $\w$. Let $\wtilde :
\Gm \to \cG$ be its composition with the projection $\cGhat \to \cG$. Denote the
composition of $\what$ with the projection $\cGhat \to \cG$ by $\wtilde : \Gm
\to \cG$. Define a section $s$ of $\cGhat\otimes_F A \to \cG\otimes_F A$ to be
{\em $\what$-graded} if $s\circ \wtilde = \what$. Equivalently, a section $s$ is
$\what$-graded if it is $\cGhat$-invariant and its derivative $ds$ is a
$\what$-graded section of $\ghat\otimes_F A \to \g\otimes_F A$ in the sense that
$ds$ commutes with the $\Gm$-actions on $\g$ and $\ghat$ induced by $\what$. The
following observation is key.

\begin{lemma}
Each $\cN$-orbit of graded sections of $\cGhat\otimes_F A \to \cG\otimes_F A$
contains a unique $\what$-graded section.
\end{lemma}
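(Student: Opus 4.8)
The statement asserts that within each $\cN(A)$-orbit of graded (i.e., $\cGhat$-invariant) sections, there is exactly one section that is actually $\what$-graded. The plan is to pass to Lie algebras, where the $\wtilde$-splitting makes everything transparent, and to exploit the fact — proved in Lemma~\ref{lem:triv_cent} — that the centralizer of $\wtilde$ is trivial, together with the negativity of the weights on $\n$.

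First I would fix a section $s_0$ of $\cGhat\otimes_F A\to\cG\otimes_F A$ that is $\what$-graded; such a section exists because $\what$ itself, together with a choice of $\wtilde$-graded splitting of $\uhat$ over $R$, provides one (concretely: split $\cGhat\to R$ compatibly with $\cG\to R$, use $\what$ to grade $\uhat$ and $\u$, and take $s_0$ to be the degree-preserving inclusion $\u\hookrightarrow\uhat$ on $R$-invariant sections, as in the previous lemma). Any other $\cGhat$-invariant section $s$ in the same $\cN(A)$-orbit has the form $s = (\operatorname{Ad} n)\circ s_0$ for some $n\in\cN(A)$; since $\cN$ is unipotent write $n=\exp X$ with $X\in\n\otimes_F A$. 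Using the $\what$-grading, decompose $X = \sum_{m<0} X_m$ according to the weight decomposition $\n\otimes_F A = \bigoplus_{m<0}\n_m\otimes_F A$ (recall $\n=W_{-1}\n$, so only strictly negative weights occur and the sum is finite modulo each $W_{-N}$, hence convergent in the prounipotent setting). The section $s$ is $\what$-graded precisely when $\operatorname{Ad}(\what(\lambda))\,n\,\what(\lambda)^{-1} = n$ for all $\lambda$, i.e. when $n$ lies in the centralizer of $\what$ in $\cN(A)$; but a Lie-algebra element centralizes $\what$ iff it lies in weight $0$, and $\n$ has no weight-$0$ part, so the only such $n$ is the identity. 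This already gives uniqueness of the $\what$-graded section in the orbit: if $(\operatorname{Ad} n)\circ s_0$ and $(\operatorname{Ad} n')\circ s_0$ are both $\what$-graded, then $(\operatorname{Ad}(n'n^{-1}))\circ s_0$ equals $s_0$ after re-expressing, forcing $n'n^{-1}=1$ by triviality of the centralizer.

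For existence, I would argue that every $\cGhat$-invariant section in the orbit of $s_0$ can be conjugated by a suitable element of $\cN(A)$ into the $\what$-graded one. Given $s=(\operatorname{Ad}\exp X)\circ s_0$, I want $n'\in\cN(A)$ with $(\operatorname{Ad} n')\circ s$ again $\what$-graded, equivalently $n'\exp(X)$ centralizing $\what$, equivalently $n'\exp(X)=1$ since the centralizer is trivial; so in fact $n'=\exp(-X)$ works and $(\operatorname{Ad}\exp(-X))\circ s = s_0$ is the desired $\what$-graded representative. The only point needing care is that the set of $\what$-graded sections is nonempty to begin with and that $s_0$ as constructed really is both $\cGhat$-invariant and $\what$-graded — this is where I would invoke the Levi-type splitting of $\cGhat\to R$ compatible with that of $\cG\to R$ (available since $\Uhat$ is prounipotent) and the characterization, stated just before the lemma, that a section is $\what$-graded iff its derivative commutes with the $\Gm$-actions induced by $\what$.

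The main obstacle is a bookkeeping one rather than a conceptual one: making the passage between the group picture and the graded Lie-algebra picture precise in the prounipotent, $A$-relative setting — in particular checking that the weight decomposition of $\n\otimes_F A$ is compatible with the $\cN(A)$-action by conjugation and that "conjugate by $\exp(-X)$" is a well-defined operation at each finite level $\cN/W_{-N}\cN$ and passes to the limit. Once one is comfortable that $\n\otimes_F A$ carries a $\Gm$-action with strictly negative weights and that $X\mapsto$ (section) is an $\cN(A)$-equivariant bijection onto the orbit, the triviality of the weight-$0$ part (equivalently Lemma~\ref{lem:triv_cent} applied inside $\cGhat$) does all the work, giving both existence and uniqueness simultaneously.
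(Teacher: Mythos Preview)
Your uniqueness argument is correct and coincides with the paper's: if $s$ and $nsn^{-1}$ are both $\what$-graded then $n$ centralizes $\what$, and Lemma~\ref{lem:triv_cent} (applied inside $\cGhat$) forces $n=1$.

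The existence argument, however, has a genuine gap. You fix one $\what$-graded section $s_0$ and then only treat sections $s$ that already lie in the $\cN(A)$-orbit of $s_0$. For such $s$ your argument is tautological: of course conjugating $s=ns_0n^{-1}$ by $n^{-1}$ recovers $s_0$. But the lemma concerns \emph{every} $\cN(A)$-orbit of sections, and there are typically many of them --- the set of orbits is precisely $\Hnab^1(\cG,\cN)(A)$, the object one is trying to construct. Exhibiting one $\what$-graded section $s_0$ establishes nothing about an orbit that does not contain $s_0$; ``the set of $\what$-graded sections is nonempty'' is not the issue.

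The paper's proof handles an arbitrary section $s$ directly, without any base point. One composes $s$ with $\wtilde$ to obtain a cocharacter $s\circ\wtilde:\Gm\to\cGhat\otimes_F A$ lifting $\w$, and then invokes Corollary~\ref{cor:appendix} (built on the vanishing of $H^1(\Gm\otimes A,V_d)$ in Proposition~\ref{prop:appendix}) to conclude that any two lifts of $\w$ to $\cGhat\otimes_F A$ through $\cN$ are conjugate by an element of $\cN(A)$. Hence $s\circ\wtilde=u\,\what\,u^{-1}$ for some $u\in\cN(A)$, and $u^{-1}su$ is the desired $\what$-graded representative of the orbit of $s$. This conjugacy-of-cocharacters step is the missing ingredient in your argument; it is what produces a $\what$-graded section in an orbit about which one knows nothing a~priori.
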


\begin{proof}
Suppose that $s$ is a section of $\cGhat\otimes_F A \to \cG_F \otimes A$. Then
one has the lift $s\circ \wtilde$ of $\wtilde$ to $\cGhat$.
Corollary~\ref{cor:appendix} implies that it is conjugate to $\what$ by an
element $u$ of $\cN(A)$:
$$
s\circ \wtilde = u\what u^{-1}.
$$
But then
$$
(u^{-1}su)\circ \wtilde = u^{-1}(s\circ \wtilde)u = \what
$$
so that $u^{-1}su$ is $\what$-invariant. Thus every $\cN(A)$-orbit of sections
contains a $\what$-invariant section.

To prove uniqueness, suppose that $u\in \cN(A)$ and that $s$ and $usu^{-1}$
are both $\what$-invariant. Then $u$ centralizes $\what$ as
$$
\what = (usu^{-1})\circ \wtilde = u(s\circ \wtilde)u^{-1} = u\what u^{-1}.
$$
Lemma~\ref{lem:triv_cent} implies that $u=1$.
\end{proof}

The second key ingredient in the construction of non-abelian cohomology of
negatively weighted extensions is the following result. Note that the adjoint
action of $\cGhat$ on $\ghat$ induces $R$-actions on $\Gr^W_\dot\ghat$ and
$\Gr^W_\dot\g$.

\begin{proposition}
\label{prop:scheme}
There is an ind-affine $F$-scheme $\Sect_\dot^R(\Gr^W_\dot\g,\Gr^W_\dot\n)$
which represents the functor that takes an $F$-algebra $A$ to the set
$$
\{R\text{-invariant graded sections of }
\Gr^W_\dot\ghat\otimes_F A \to \Gr^W_\dot\g\otimes_F A\}.
$$
It is of finite type when $H^1(\g,\Gr^W_m\n)^\Rbar$ is finite dimensional for
all $m\in\Z$.
\end{proposition}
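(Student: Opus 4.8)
The plan is to build the scheme $\Sect_\dot^R(\Gr^W_\dot\g,\Gr^W_\dot\n)$ explicitly, level by level in the weight filtration, exploiting that $\Gr^W_\dot\ghat$ and $\Gr^W_\dot\g$ are graded $R$-modules with $\Gr^W_\dot\n$ concentrated in negative weights. First I would reduce to a statement about Lie algebra maps: an $R$-invariant graded section of $\Gr^W_\dot\ghat\otimes_F A\to\Gr^W_\dot\g\otimes_F A$ is the same as an $R$-equivariant, weight-preserving Lie algebra splitting of the (abelian, since $\n$ is now viewed through its associated graded and we are in the split situation) extension $0\to\Gr^W_\dot\n\to\Gr^W_\dot\ghat\to\Gr^W_\dot\g\to0$. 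Because $\Gr^W_\dot\ghat$ splits $R$-equivariantly as a graded vector space into $\Gr^W_\dot\g\oplus\Gr^W_\dot\n$, such a section is determined by an $R$-equivariant graded linear map $\sigma:\Gr^W_\dot\g\otimes_F A\to\Gr^W_\dot\n\otimes_F A$, and the condition that the resulting map be a Lie algebra homomorphism is a closed quadratic condition on $\sigma$. The affine $F$-scheme is then cut out inside the (possibly infinite-dimensional, hence ind-affine) affine space of such $\sigma$ by these quadratic equations.

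Concretely, I would set $L=\Gr^W_\dot\g$, $M=\Gr^W_\dot\n$, regard $M$ as an $L$-module (the bracket of $L$ on $M$ coming from $\Gr^W_\dot\ghat$), and note that the possible Lie brackets on $L\oplus M$ inducing the given data are parametrized by the $2$-cocycles: the bracket is $[(x,m),(x',m')]=([x,x'], x\cdot m'-x'\cdot m + c(x,x'))$ for a map $c\in\Hom(\Lambda^2 L,M)$. For each fixed such extension $\Gr^W_\dot\ghat$ we have one fixed cocycle $c$; a graded $R$-equivariant section is a graded $R$-equivariant $\sigma\in\Hom(L,M)$ with $d\sigma=c$, i.e.\ $\sigma([x,x'])-x\cdot\sigma(x')+x'\cdot\sigma(x)=c(x,x')$. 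The solution set, if non-empty, is a torsor under $Z^1_R(L,M)_{(0)}$, the graded-degree-zero $R$-invariant $1$-cocycles — but what we want is the scheme of all $\sigma$, over all $A$, which is simply the affine subscheme of $\Hom_R(L,M)_{(0)}\otimes A$ (an affine space, a priori a countable product over the weights, hence ind-affine) cut out by the affine-linear equations $d\sigma=c$. Thus $\Sect$ is a closed subscheme of an ind-affine space defined by affine-linear equations, hence ind-affine; representability is then formal.

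For the finite type claim, I would use that the weight filtration has finitely many nonzero graded pieces in each bounded range together with the negativity: $\Gr^W_{-m}\n=0$ for $m\le0$, and $\Hom_R(L,M)_{(0)}$ in weight-degree zero only receives contributions $\Hom_R(\Gr^W_{-m}\g,\Gr^W_{-m}\n)$; when each $H^1(\g,\Gr^W_{-m}\n)^\Rbar$ is finite dimensional one deduces (via the earlier identification $H^1(\g,V)^\Rbar\cong\Hom_R(H_1(\u),V)$ and the resulting control on cocycles modulo coboundaries, together with finite dimensionality of each $\Gr^W_{-m}\n$ assumed in the introduction) that the relevant spaces of $\sigma$ are finite dimensional in each weight, and that only finitely many weights contribute — so the ambient affine space is finite dimensional and $\Sect$ is of finite type. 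The main obstacle I anticipate is the bookkeeping in this last step: one must be careful that finite dimensionality of the cohomology groups $H^1(\g,\Gr^W_{-m}\n)^\Rbar$ — rather than of the larger cocycle spaces $Z^1$ — suffices, which works precisely because the torsor structure means $\Sect$ is a torsor over a finite-dimensional group once one fixes a point, and because the base change behaviour of $\Hom_R(-,-)$ in characteristic zero is exact; verifying that the ind-scheme structure genuinely collapses to a scheme of finite type under the stated hypothesis is the delicate point, everything else being linear algebra with $R$-equivariance.
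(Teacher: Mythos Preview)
Your overall strategy --- fix an $R$-equivariant linear splitting of $\Gr^W_\dot\ghat \to \Gr^W_\dot\g$, parametrize sections by a correction term $\sigma:\Gr^W_\dot\g\to\Gr^W_\dot\n$, and cut out by the Lie-algebra condition --- is sound for the ind-affine claim and close in spirit to the paper's argument. But there are two problems, one local and one structural.

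First, $\Gr^W_\dot\n$ is \emph{not} abelian in general: the weight filtration on $\n$ is typically coarser than the lower central series, so the associated graded retains a nontrivial bracket. Your bracket formula is missing the term $[m,m']$, and the correct condition on $\sigma$ is of Maurer--Cartan type, $d\sigma+\tfrac{1}{2}[\sigma,\sigma]=c$, not $d\sigma=c$. Consequently the solution set is \emph{not} a torsor under $Z^1_{(0)}(\g,\n)^R$, and the finite-type argument you sketch via that torsor structure collapses. (You were right the first time when you called the condition quadratic.)

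Second, and this is the real gap, your ambient affine space $\Hom_R(\Gr^W_\dot\g,\Gr^W_\dot\n)_{(0)}=\bigoplus_m\Hom_R(\Gr^W_m\u,\Gr^W_m\n)$ is not controlled by the hypothesis. The graded pieces $\Gr^W_m\u$ can be arbitrarily large, and the assumed finiteness of $H^1(\g,\Gr^W_m\n)^\Rbar=\Hom_R(\Gr^W_mH_1(\u),\Gr^W_m\n)$ only bounds maps out of the \emph{abelianization} $H_1(\u)$, not out of $\u$ itself. The paper's device here is to present $\Gr^W_\dot\u$ as a quotient of the free Lie algebra $\L(\Gr^W_\dot H_1(\u))$ and parametrize lifts by their restriction to generators: a graded $R$-invariant Lie-algebra map from the free Lie algebra to $\Gr^W_\dot\ghat$ is determined freely by a graded $R$-invariant linear map $\Gr^W_\dot H_1(\u)\to\Gr^W_\dot\n$. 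The ambient affine space is then a torsor over $\Hom_R(\Gr^W_\dot H_1(\u),\Gr^W_\dot\n)=\bigoplus_m H^1(\g,\Gr^W_m\n)^\Rbar$, which is finite dimensional exactly under the stated hypothesis, and the section scheme is cut from it by the genuinely quadratic relations $s([x,y])=[s(x),s(y)]$. This free-Lie-algebra reduction to generators is the missing idea; without it the finite-type claim does not follow from the hypothesis you are given.
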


Note that sections of $\Gr^W_\dot\ghat \to \Gr^W_\dot\g$ are $\r$-invariant, and
therefore $R_o$-invariant.

\begin{proof}
Since $\cG$ and $\cGhat$ are negatively weighted extensions of $R$, we have
$$
\Gr^W_0 \g = \Gr^W_0 \ghat = \r.
$$
Thus graded sections of $\Gr^W_\dot \ghat \to \Gr^W_\dot \g$ correspond to
$R$-invariant graded sections of $\Gr^W_\dot \uhat \to \Gr^W_\dot \u$.

Choose an $R$-invariant splitting $\theta$ of $\Gr^W_\dot \u \to \Gr^W_\dot
H_1(\u)$. Since $\u$ is prounipotent, this induces a graded, $R$-invariant
surjection
$$
\thetatilde : \L(\Gr^W_\dot H_1(\u)) \to \Gr^W_\dot\u
$$
from the free Lie algebra generated by $\Gr^W_\dot H_1(\u)$. The $R$-invariant
lifts $s$ of $\thetatilde$
$$
\xymatrix{
& \L(\Gr^W_\dot H_1(\u)) \ar[d]^\thetatilde\ar[dl]_s \cr
\Gr^W_\dot \ghat \ar[r] & \Gr^W_\dot \g
}
$$
are induced by graded $R$-invariant lifts $\Gr^W_\dot H_1(\u) \to \Gr^W_\dot
\ghat$ of $\theta$. The set of $s$ that make the diagram commute comprise an
ind-affine space that is a principal homogeneous space over the ind-vector space
$$
\Hom_R(\Gr^W_\dot H_1(\u),\Gr^W_\dot\n) =
\varinjlim_\alpha\Hom_R(\Gr^W_\dot H_1(\u_\alpha),\Gr^W_\dot\n).
$$
Those $s$ that descend to sections of $\Gr^W_\dot\ghat \to \Gr^W_\dot\g$
are precisely those that satisfy the system of equations
$$
s([x,y]) - [s(x),s(y)] = 0,\qquad x,y\in \Gr^W_\dot \g.
$$
It follows that the the functor $A\mapsto \{\text{graded sections of }
\Gr^W_\dot\ghat\otimes_F A \to \Gr^W_\dot\g\otimes_F A\}$ is represented by an
ind-affine $F$-scheme.

Since
$$
H^1(\g,\Gr^W_m\n)^\Rbar =
\Hom_R(H_1(\u),\Gr^W_m\n) = \Hom_R(\Gr^W_m H_1(\u),\Gr^W_m\n),
$$
the affine space of $R$-invariant lifts $s$ of $\thetatilde$ is of finite type
when $H^1(\g,\Gr^W_m\n)^\Rbar$ is finite dimensional for all $m\in \Z$.
Consequently, the subscheme $\Sect_\dot^R(\Gr^W_\dot\g,\Gr^W_\dot\n)$ is of
finite type when $H^1(\g,\Gr^W_m\n)^\Rbar$ is finite dimensional for all $m\in
\Z$.
\end{proof}

Suppose that $A$ is an $F$-algebra. Each section $s$ of $\cGhat\otimes_F A \to
\cG\otimes_F A$ induces a $\cGhat$-invariant section $ds$ of the Lie algebra
homomorphism $\ghat\otimes_F A \to \g\otimes_F A$ that preserves the weight
filtration. This, in turn, induces an $R$-invariant section $\Gr^W_\dot ds$ of
$\Gr^W_\dot \ghat \otimes_F A\to \Gr^W_\dot \g \otimes_F A$. Observe that, for
each choice $\what$ of a lift of $\w$ to $\cGhat$, the composition of the maps
\begin{align}
\label{eqn:maps_sects}
&\{\what\text{-graded sections of } \cGhat\otimes_F A \to \cG\otimes_F A\} \cr
&\hookrightarrow
\{\text{sections of } \cGhat\otimes_F A \to \cG\otimes_F A\} \cr
&\overset{\Gr^W_\dot\! d}{\to}
\{R\text{-invariant graded sections of }
\Gr^W_\dot\ghat\otimes_F A \to \Gr^W_\dot\g\otimes_F A\}
\end{align}
is a bijection.

\begin{corollary}
\label{cor:scheme}
For each lift $\what : \Gm \to \cGhat$ of the central cocharacter $\w: \Gm \to
R$ to $\cGhat$, there is an ind-affine $F$-scheme $\Sect^\what_\dot(\cG,\cN)$
which represents the functor that takes an $F$-algebra $A$ to the set
$$
\{\what\text{-graded sections of } \cGhat\otimes_F A \to \cG\otimes_F A\}
$$
It is of finite type when $H^1(\g,\Gr^W_m\n)^\Rbar$ is finite dimensional for
all $m\in\Z$.
\end{corollary}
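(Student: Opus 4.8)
The plan is to read the corollary off Proposition~\ref{prop:scheme} by means of the bijection (\ref{eqn:maps_sects}). \emph{First} I would check that this bijection is natural in the $F$-algebra $A$. Each of its three constituents is: the inclusion of $\what$-graded sections into all sections of $\cGhat\otimes_F A\to\cG\otimes_F A$ is visibly functorial; the derivative map $s\mapsto ds$ commutes with base change because both $(-)\otimes_F A$ and the formation of Lie algebras do; and $\Gr^W_\dot$ is an exact, base-change compatible functor. Hence (\ref{eqn:maps_sects}) is an isomorphism of set-valued functors on $F$-algebras between $A\mapsto\{\what\text{-graded sections of }\cGhat\otimes_F A\to\cG\otimes_F A\}$ and the functor represented, by Proposition~\ref{prop:scheme}, by $\Sect_\dot^R(\Gr^W_\dot\g,\Gr^W_\dot\n)$. \emph{Then} I would simply set $\Sect^\what_\dot(\cG,\cN):=\Sect_\dot^R(\Gr^W_\dot\g,\Gr^W_\dot\n)$, equipped with the universal $\what$-graded section obtained by transporting the universal section of the latter scheme through this isomorphism. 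Representability is then immediate, and — the two schemes being literally equal — the finite-type assertion is exactly the one already established in Proposition~\ref{prop:scheme}, namely that it holds when $H^1(\g,\Gr^W_m\n)^\Rbar$ is finite dimensional for every $m$. (One may note in passing that the scheme so produced does not actually depend on $\what$, only on $\cG$ and $\cN$; the notation records the choice because the identification with genuine $\what$-graded sections, equivalently the universal section, does.)

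\emph{The one point deserving care}, and the only place the negativity of the weights enters, is the bijectivity of (\ref{eqn:maps_sects}) itself — stated as an observation above, but which I would want to make explicit. The lift $\what$ acts on $\cGhat$ by conjugation, inducing a grading $\uhat=\bigoplus_m\uhat_m$ of the Lie algebra of $\Uhat$ that splits its weight filtration and is preserved by the bracket; likewise $\wtilde$ splits the weight filtration of $\u$. Because the bracket preserves the weight filtration, $\bigoplus_m\uhat_m$ is canonically isomorphic, \emph{as a graded Lie algebra}, to $\Gr^W_\dot\uhat$, and similarly for $\u$. Since $\Char F=0$, the exponential and logarithm maps — together with a Levi splitting $\cGhat\cong R\ltimes\Uhat$ chosen compatibly with $\what$ (possible because all lifts of $\w$ are $\Uhat(F)$-conjugate, hence each extends to a Levi) — identify a $\what$-graded section of $\cGhat\otimes_F A\to\cG\otimes_F A$ with an $R$-invariant graded Lie algebra section of $\bigoplus_m\uhat_m\otimes_F A\to\bigoplus_m\u_m\otimes_F A$, and hence with an $R$-invariant graded section of $\Gr^W_\dot\uhat\otimes_F A\to\Gr^W_\dot\u\otimes_F A$; as in the proof of Proposition~\ref{prop:scheme} (where $\Gr^W_0\ghat=\Gr^W_0\g=\r$), these correspond in turn to $R$-invariant graded sections of $\Gr^W_\dot\ghat\otimes_F A\to\Gr^W_\dot\g\otimes_F A$. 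Unwinding the identifications shows this chain of bijections is precisely (\ref{eqn:maps_sects}); in particular it is injective with no appeal to a centralizer statement needed here, although one may recall that the lemma preceding Proposition~\ref{prop:scheme} identifies ``$\what$-graded'' with ``$\what$-invariant'' for sections.

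\emph{I expect the main obstacle to be purely bookkeeping} rather than a genuine obstruction: keeping straight the three incarnations of a section — as a section of the proalgebraic group extension $\cGhat\to\cG$, as the $\cGhat$-invariant Lie algebra section $ds:\ghat\to\g$ it determines, and as the $R$-invariant graded section $\Gr^W_\dot ds$ of $\Gr^W_\dot\ghat\to\Gr^W_\dot\g$ — and verifying that the graded Lie algebra structure that $\what$ puts on $\bigoplus_m\uhat_m$ really is the one on $\Gr^W_\dot\uhat$. That last point is immediate from the bracket preserving the weight filtration, so once the three incarnations are matched up the corollary follows formally from Proposition~\ref{prop:scheme}.
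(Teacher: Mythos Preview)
Your proposal is correct and follows exactly the paper's approach: the corollary is stated immediately after the observation that the composite (\ref{eqn:maps_sects}) is a bijection, with no separate proof given, so it is meant to follow at once from that observation together with Proposition~\ref{prop:scheme}. You have simply unpacked the bijection (\ref{eqn:maps_sects}) and its naturality in $A$ more carefully than the paper does, which is fine.
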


\begin{corollary}
\label{cor:principal}
For all $F$-algebras $A$ and all lifts $\what$ of $\w$ to $\cGhat$, the function
$$
\cN(A) \times \Sect^\what_\dot(\cG,\cN)(A) \to
\{\text{sections of } \cGhat\otimes_F A \to \cG\otimes_F A\}
$$
defined by $(u,s)\mapsto usu^{-1}$ is a bijection. Consequently, this functor
from $F$-algebras to sets is represented by the ind-affine $F$-scheme
$\Sect(\cG,\cN) := \cN\times\Sect^\what_\dot(\cG,\cN)$. It has finite type when
$H^1(\g,\Gr^W_m\n)^\Rbar$ is finite dimensional for all $m\in\Z$.
\end{corollary}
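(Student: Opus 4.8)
The plan is to read the statement as the scheme-theoretic repackaging of the two lemmas just proved: the one asserting that each $\cN(A)$-orbit of sections of $\cGhat\otimes_F A\to\cG\otimes_F A$ contains a \emph{unique} $\what$-graded section, and Lemma~\ref{lem:triv_cent} on triviality of the centralizer of a lift of $\w$. Accordingly I would first check that for every $F$-algebra $A$ the assignment $(u,s)\mapsto usu^{-1}$ is a bijection, observe that this is natural in $A$, and then deduce representability from Corollary~\ref{cor:scheme} together with the fact that a product of (ind-)affine schemes represents the product of the functors of points.

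For surjectivity, given a section $s'$ of $\cGhat\otimes_F A\to\cG\otimes_F A$ the preceding lemma yields $u\in\cN(A)$ with $u^{-1}s'u$ a $\what$-graded section, so the pair $(u,\,u^{-1}s'u)$ maps to $s'$. For injectivity, suppose $u_1 s_1 u_1^{-1}=u_2 s_2 u_2^{-1}$ with $s_1,s_2$ both $\what$-graded; writing $v=u_2^{-1}u_1\in\cN(A)$ this says $s_2=vs_1 v^{-1}$, so $s_1$ and $s_2$ lie in a common $\cN(A)$-orbit, and the uniqueness half of the preceding lemma forces $s_1=s_2$. Then $v$ centralizes $s_1$, hence centralizes $\what=s_1\circ\wtilde$; since $\cN\subseteq\Uhat$ and $\cGhat$ is a negatively weighted extension of $R$, Lemma~\ref{lem:triv_cent} applied to $\cGhat$ gives $v=1$, so $u_1=u_2$ and $s_1=s_2$. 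Naturality in $A$ is clear, as base change commutes with conjugation and with the grading conditions.

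It remains to assemble the representability statement. The group $\cN$, being prounipotent, is an affine $F$-scheme and represents $A\mapsto\cN(A)$; by Corollary~\ref{cor:scheme}, $\Sect^\what_\dot(\cG,\cN)$ is an ind-affine $F$-scheme. Hence $\cN\times\Sect^\what_\dot(\cG,\cN)$ is ind-affine and represents $A\mapsto\cN(A)\times\Sect^\what_\dot(\cG,\cN)(A)$, which the bijection above identifies with the section functor; this gives $\Sect(\cG,\cN):=\cN\times\Sect^\what_\dot(\cG,\cN)$. For the finite-type assertion, Corollary~\ref{cor:scheme} makes $\Sect^\what_\dot(\cG,\cN)$ of finite type under the stated hypothesis on $H^1(\g,\Gr^W_m\n)^\Rbar$, while $\cN$ is a finite-dimensional unipotent algebraic group in the situations of interest (where $\cN=\cP/W_{-N-1}\cP$ and $\n=W_{-1}\n$ has finite-dimensional weight-graded quotients); a product of two $F$-schemes of finite type is of finite type. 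The content is essentially formal, the real work having been done in Proposition~\ref{prop:scheme}, Corollary~\ref{cor:scheme} and the two preceding lemmas; the one step deserving care is the injectivity argument, where one must separately extract $s_1=s_2$ (from uniqueness of the $\what$-graded representative) and $u_1=u_2$ (from Lemma~\ref{lem:triv_cent}), and confirm that the latter lemma applies to $\cGhat$ and not only to $\cG$.
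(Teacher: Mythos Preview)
Your proposal is correct and matches the paper's implicit argument: the paper states this corollary without proof, leaving it as an immediate consequence of the two preceding lemmas (existence and uniqueness of a $\what$-graded section in each $\cN(A)$-orbit, with uniqueness already incorporating Lemma~\ref{lem:triv_cent}) together with Corollary~\ref{cor:scheme}. Your unpacking of surjectivity, injectivity, and representability is exactly what the paper intends; the only minor looseness is the finite-type claim for $\cN$, but in the paper's setup of \S4.1 the group $\cN$ is taken to be unipotent (not merely prounipotent), so it is automatically of finite type.
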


Yoneda's Lemma implies that the scheme $\Sect(\cG,\cN)$ is independent of the
choice of the lift $\what$ of $\w$.

\subsection{Existence of non-abelian cohomology}

Theorem~\ref{thm:representability} follows directly from:

\begin{theorem}
There is an ind-affine scheme $\Hnab^1(\cG,\cN)$ that represents the functor
that takes an $F$-algebra $A$ to the set
$$
\{\text{sections of } \cGhat\otimes_F A \to \cG\otimes_F A\}/\cN(A)
$$
where $\cN(A)$ acts on sections by conjugation. It is of finite type when
$H^1(\g,\Gr^W_m\n)^\Rbar$ is finite dimensional for all $m\in \Z$.
\end{theorem}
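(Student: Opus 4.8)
The plan is to show that, for any fixed lift $\what : \Gm \to \cGhat$ of $\w$, the ind-affine scheme $\Sect^\what_\dot(\cG,\cN)$ of Corollary~\ref{cor:scheme} already represents the quotient functor, and then to set $\Hnab^1(\cG,\cN) := \Sect^\what_\dot(\cG,\cN)$.

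First I would transport the conjugation action of $\cN(A)$ through the bijection of Corollary~\ref{cor:principal},
$$
\cN(A) \times \Sect^\what_\dot(\cG,\cN)(A) \;\overset{\sim}{\longrightarrow}\;
\{\text{sections of } \cGhat\otimes_F A \to \cG\otimes_F A\},
\qquad (u,s)\mapsto usu^{-1}.
$$
For $v\in\cN(A)$ one has $v(usu^{-1})v^{-1} = (vu)s(vu)^{-1}$, so under this identification the $\cN(A)$-action on sections corresponds to left translation on the factor $\cN(A)$. The action is therefore free, and the orbit set is canonically $\Sect^\what_\dot(\cG,\cN)(A)$; equivalently, the bijection shows that every $\cN(A)$-orbit of sections of $\cGhat\otimes_F A \to \cG\otimes_F A$ contains exactly one $\what$-graded section.

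Next I would check that this identification is natural in $A$: the bijection of Corollary~\ref{cor:principal}, the product decomposition, and the conjugation action all commute with base change along homomorphisms $A\to A'$ of $F$-algebras, so the functor $A\mapsto\{\text{sections}\}/\cN(A)$ is isomorphic to the functor represented by $\Sect^\what_\dot(\cG,\cN)$. Hence $\Hnab^1(\cG,\cN):=\Sect^\what_\dot(\cG,\cN)$ represents it; it is ind-affine, and of finite type whenever $H^1(\g,\Gr^W_m\n)^\Rbar$ is finite dimensional for all $m\in\Z$, both by Corollary~\ref{cor:scheme}. Finally, since the quotient functor makes no reference to $\what$, Yoneda's Lemma produces canonical isomorphisms between the schemes obtained from different choices of $\what$, so the notation $\Hnab^1(\cG,\cN)$ is unambiguous.

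I do not expect a serious obstacle: the substantive content has already been isolated in Corollaries~\ref{cor:scheme} and~\ref{cor:principal}, which in turn rest on Lemma~\ref{lem:triv_cent} and on the existence of a $\what$-graded representative in each $\cN$-orbit. The one point deserving care is the functoriality in $A$ used above --- i.e.\ that fixing $\what$ once and for all is harmless and that all the identifications commute with extension of scalars; everything else is bookkeeping.
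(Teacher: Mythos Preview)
Your proposal is correct and follows essentially the same route as the paper: both arguments use Corollary~\ref{cor:principal} to see that each $\cN(A)$-orbit of sections contains a unique $\what$-graded representative, so the quotient functor is represented by $\Sect^\what_\dot(\cG,\cN)$. The paper phrases this via the chain of bijections $\Sect^\what_\dot(\cG,\cN)(A)\to\{\text{sections}\}/\cN(A)\to\Sect_\dot^R(\Gr^W_\dot\g,\Gr^W_\dot\n)(A)$ and appeals to Proposition~\ref{prop:scheme} as well, but since the composite of (\ref{eqn:maps_sects}) identifies the two schemes of sections this is only a cosmetic difference.
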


\begin{proof}
The composite of the maps (\ref{eqn:maps_sects}) is a bijection for each choice
of a lift $\what$ of $\w$. Corollary~\ref{cor:principal} implies that for all
$F$-algebras $A$, the natural maps
\begin{multline*}
\Sect^\what_\dot(\cG,\cN)(A)
\to \{\text{sections of } \cGhat\otimes_F A \to \cG\otimes_F A\}/\cN(A) \cr
\to \Sect_\dot^R(\Gr^W_\dot\g,\Gr^W_\dot \n)(A)
\end{multline*}
are bijections. The result now follows from Proposition~\ref{prop:scheme} and
Corollary~\ref{cor:scheme}.
\end{proof}

A corollary of the proof is the following useful description of
$\Hnab^1(\cG,\cN)$.

\begin{corollary}
\label{cor:concrete}
The morphism of ind-schemes
$$
\Sect(\cG,\cN) \to \Sect_\dot^R(\Gr^W_\dot\g,\Gr^W_\dot\n)
$$
that takes a section $s$ to $\Gr^W_\dot ds$ induces an
isomorphism
$$
\Hnab^1(\cG,\cN) \cong \Sect_\dot^R(\Gr^W_\dot\g,\Gr^W_\dot\n).
$$
\end{corollary}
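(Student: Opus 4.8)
The plan is to assemble this corollary directly from the three bijections established immediately above, without any new computation. First I would recall the setup from Corollary~\ref{cor:principal}: after fixing a lift $\what$ of $\w$ to $\cGhat$, we have the identification $\Sect(\cG,\cN) \cong \cN \times \Sect^\what_\dot(\cG,\cN)$, under which the $\cN(A)$-action on the left factor is the translation action and every $\cN(A)$-orbit of sections meets $\Sect^\what_\dot(\cG,\cN)(A)$ in exactly one point. Hence the quotient morphism $\Sect(\cG,\cN) \to \Hnab^1(\cG,\cN)$ restricts to an isomorphism $\Sect^\what_\dot(\cG,\cN) \xrightarrow{\ \sim\ } \Hnab^1(\cG,\cN)$ of ind-schemes (this is exactly the content of the proof of the preceding theorem, via Yoneda).

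Next I would observe that the composite of the maps in \eqref{eqn:maps_sects} is a bijection of sets for every $F$-algebra $A$: the inclusion of $\what$-graded sections into all sections followed by $s \mapsto \Gr^W_\dot ds$ lands in $R$-invariant graded sections of $\Gr^W_\dot\ghat \to \Gr^W_\dot\g$, and this composite is precisely the bijection asserted after Corollary~\ref{cor:principal}. Since both $\Sect^\what_\dot(\cG,\cN)$ and $\Sect_\dot^R(\Gr^W_\dot\g,\Gr^W_\dot\n)$ are ind-affine $F$-schemes representing the respective functors (Corollary~\ref{cor:scheme} and Proposition~\ref{prop:scheme}), Yoneda's Lemma upgrades this functorial bijection to an isomorphism of ind-schemes $\Sect^\what_\dot(\cG,\cN) \xrightarrow{\ \sim\ } \Sect_\dot^R(\Gr^W_\dot\g,\Gr^W_\dot\n)$. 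Composing with the isomorphism from the previous paragraph gives
$$
\Hnab^1(\cG,\cN) \xleftarrow{\ \sim\ } \Sect^\what_\dot(\cG,\cN) \xrightarrow{\ \sim\ } \Sect_\dot^R(\Gr^W_\dot\g,\Gr^W_\dot\n).
$$

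Finally I would check that this composite isomorphism is the one induced by $s \mapsto \Gr^W_\dot ds$, as claimed in the statement. On $A$-points the map $\Sect(\cG,\cN)(A) \to \Sect_\dot^R(\Gr^W_\dot\g,\Gr^W_\dot\n)(A)$ sending $s$ to $\Gr^W_\dot ds$ is constant on $\cN(A)$-orbits, because conjugation by $u \in \cN(A)$ induces, on the weight graded of the Lie algebra, the identity (the adjoint action of $\cN$ is trivial on $\Gr^W_\dot$, as $\n = W_{-1}\n$ forces $[\,n,\,{-}\,]$ to strictly lower weights); hence it factors through $\Hnab^1(\cG,\cN)(A)$, and on the slice $\Sect^\what_\dot(\cG,\cN)(A)$ it agrees with the composite bijection above. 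The only genuinely substantive point — and the step I expect to need the most care — is precisely this last compatibility: verifying that $\Gr^W_\dot ds$ depends only on the $\cN(A)$-conjugacy class of $s$, so that the square relating $\Sect(\cG,\cN) \to \Hnab^1(\cG,\cN)$, the slice inclusion, and the two maps to $\Sect_\dot^R$ commutes. Everything else is bookkeeping with Yoneda and the already-established bijections.
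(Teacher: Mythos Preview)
Your proposal is correct and follows essentially the same route as the paper: the paper presents this statement as an immediate corollary of the proof of the preceding theorem, which establishes precisely the chain of bijections $\Sect^\what_\dot(\cG,\cN)(A) \to \Hnab^1(\cG,\cN)(A) \to \Sect_\dot^R(\Gr^W_\dot\g,\Gr^W_\dot\n)(A)$ and then invokes Yoneda via Proposition~\ref{prop:scheme} and Corollary~\ref{cor:scheme}. Your additional care in verifying that $s\mapsto \Gr^W_\dot ds$ is constant on $\cN(A)$-orbits (using $\n = W_{-1}\n$) is a point the paper leaves implicit, so if anything you are being slightly more explicit than the original.
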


\subsection{The exact sequence}
We continue with the notation of the previous two sections. Suppose that
$W_{-N-1}\n = 0$. Set $\cZ = W_{-N}\cN$, which we suppose to be non-zero. Denote
the Lie algebra of $\cZ$ by $\z$. Theorem~\ref{thm:exactness} is a direct
consequence of:

\begin{proposition}
\label{prop:nab_special}
Assume that $H^1(\g,\Gr^W_{-m}\n)^\Rbar$ is finite dimensional for all $m\in \Z$
so that $\Hnab^1(\cG,\cN)$ and $\Hnab^1(\cG,\cN/\cZ)$ are affine $F$-schemes of
finite type. Then
\begin{enumerate}

\item there is a morphism $\delta : \Hnab^1(\cG,\cN/\cZ) {\to} H^2(\g,\z)^\Rbar$
of $F$-schemes,

\item there is a principal action of $H^1(\g,\z)^\Rbar$ on $\Hnab^1(\cG,\cN)$,

\item for all $F$-algebras $A$, $\Hnab^1(\cG,\cN)(A)$ is a principal
$H^1(\g,\z)(A)^\Rbar$-set over the $A$-rational points $(\delta^{-1}(0))(A)$ of
the scheme $\delta^{-1}(0)$.

\end{enumerate}
\end{proposition}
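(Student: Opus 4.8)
The plan is to use Corollary~\ref{cor:concrete} to replace the two cohomology schemes by schemes of $R$-invariant graded Lie-algebra sections, and then to run the usual obstruction theory for lifting a section through the central extension $0\to\z\to\Gr^W_\dot\ghat\to\Gr^W_\dot\ghat/\z\to 0$, keeping careful track of $R$-equivariance and of the weight grading. Note first that, since $\n=W_{-1}\n$ and $W_{-N-1}\n=0$, the ideal $\z=W_{-N}\n=\Gr^W_{-N}\n$ of $\n$ is concentrated in weight $-N$, is abelian (as $[\z,\z]$ would sit in weight $-2N\le-N-1$), and is killed by the adjoint action of $\uhat$ (as $[\uhat,\z]\subseteq\n$ lands in weights $\le -N-1$); hence $\z$ is an $R$-module, inflated to a $\cG$-module, and inside $\Gr^W_\dot\ghat$ it is an abelian ideal on which the adjoint action is the $R$-action, so that $\cZ=\exp\z$ is abelian. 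By Corollary~\ref{cor:concrete},
$$
\Hnab^1(\cG,\cN)\cong\Sect^R_\dot(\Gr^W_\dot\g,\Gr^W_\dot\n),\qquad
\Hnab^1(\cG,\cN/\cZ)\cong\Sect^R_\dot(\Gr^W_\dot\g,\Gr^W_\dot(\n/\z)),
$$
and under these identifications the map $\pi$ of Theorem~\ref{thm:exactness} is induced by the graded projection $q\colon\Gr^W_\dot\ghat\to\Gr^W_\dot\ghat/\z$: a point of $\Hnab^1(\cG,\cN)(A)$ is an $R$-invariant graded Lie-algebra section $s$ of $\Gr^W_\dot\ghat\otimes_F A\to\Gr^W_\dot\g\otimes_F A$, and $\pi(s)=q_A\circ s$.

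To construct $\delta$, choose an $R$-equivariant graded linear splitting $\sigma\colon\Gr^W_\dot\ghat/\z\to\Gr^W_\dot\ghat$ of $q$, which exists because $R$ is reductive. For an $F$-algebra $A$ and a point $\overline{s}$ of $\Hnab^1(\cG,\cN/\cZ)(A)$, the failure of $\sigma_A\circ\overline{s}$ to be a Lie-algebra homomorphism is the $2$-cochain
$$
c_{\overline{s}}(x,y)=[\sigma_A\overline{s}(x),\sigma_A\overline{s}(y)]-\sigma_A\overline{s}([x,y]),
$$
which takes values in $\z\otimes_F A$ since $\overline{s}$ is a homomorphism. The Jacobi identity in $\Gr^W_\dot\ghat$ shows $c_{\overline{s}}$ is a $2$-cocycle, and the $R$-invariance and gradedness of $\sigma$ and $\overline{s}$ show it lies in the $R$-invariant part of the Chevalley--Eilenberg complex; using the identification $\g\cong\Gr^W_\dot\g$ afforded by a lift $\wtilde$ (which does not affect cohomology classes), its class $\delta(\overline{s})$ is a well-defined element of $H^2(\g,\z)^\Rbar\otimes_F A$. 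Changing $\sigma$ alters $c_{\overline{s}}$ by the coboundary of an element of $\Hom_R(\Gr^W_\dot\ghat/\z,\z)\otimes_F A$, so $\delta(\overline{s})$ is independent of $\sigma$; it is visibly natural in $A$ and given by polynomial formulas in the coordinates of the section, so $\delta$ is a morphism of $F$-schemes. This is part (i).

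For parts (ii) and (iii), fix $\overline{s}\in\Hnab^1(\cG,\cN/\cZ)(A)$. Any lift $s$ of $\overline{s}$ to $\Hnab^1(\cG,\cN)(A)$ differs from $\sigma_A\overline{s}$ by an $R$-invariant graded linear map $\mu$ into $\z\otimes_F A$, i.e.\ an element of $C^1(\g,\z)^\Rbar\otimes_F A$, and expanding $s([x,y])-[s(x),s(y)]$ using $[\z,\z]=0$ and the $\g$-module structure of $\z$ gives that $s$ is a homomorphism if and only if $d\mu=-c_{\overline{s}}$. Since $R$ is reductive and $F$ has characteristic zero, the functor of $R$-invariants is exact, so $H^\dot(\g,\z)^\Rbar\otimes_F A$ computes the cohomology of the $R$-invariant subcomplex; hence $\pi^{-1}(\overline{s})(A)$ is non-empty exactly when $\delta(\overline{s})=0$, and when non-empty it is a torsor under the group of $R$-invariant $1$-cocycles. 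That group is isomorphic to $H^1(\g,\z)^\Rbar$: an $R$-invariant $1$-cocycle $D\colon\Gr^W_\dot\g\to\z$ satisfies, on $\r=\Gr^W_0\g$, both the cocycle identity and $D([r,v])=r\cdot D(v)$, which together force $D(\r)\subseteq\z^\r=0$ ($\z^\r=0$ because $\z$ is pure of weight $-N\neq 0$); hence $D$ is supported on $\Gr^W_{<0}\g=\u$, where, $\u$ acting trivially on $\z$, it is a Lie homomorphism factoring through $H_1(\u)$, and $R$-invariance makes it a morphism of $R$-modules. Conversely every element of $\Hom_R(H_1(\u),\z)$ extends by zero on $\r$ to an $R$-invariant $1$-cocycle, and no nonzero $R$-invariant $1$-cocycle is a coboundary (an $R$-invariant $d_v$, $v\in\z$, forces $v\in\z^{R_o}=0$); so the $R$-invariant $1$-cocycles form a group isomorphic to $\Hom_R(H_1(\u),\z)\cong H^1(\g,\z)^\Rbar$, the last isomorphism being the one established in the previous section (valid since $\z^{R_o}=0$). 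Transporting the torsor structure across this isomorphism, the rule $s\mapsto s+D$ (with $D$ viewed as a map into $\z\subseteq\Gr^W_\dot\ghat$) defines an algebraic action of the vector group $H^1(\g,\z)^\Rbar$ on $\Hnab^1(\cG,\cN)$ which preserves $\pi$; since every point in the image of $\pi$ has a lift, $\delta\circ\pi=0$, so $\pi$ factors through the closed subscheme $\delta^{-1}(0)$, over which, by the above, it is a torsor under $H^1(\g,\z)^\Rbar$. This gives parts (ii) and (iii).

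The one genuinely delicate point is keeping the obstruction theory $R$-equivariant, so that the natural targets are $H^2(\g,\z)^\Rbar$ and $H^1(\g,\z)^\Rbar$ rather than the enormous groups of cocycles: this rests on the exactness of the $R$-invariants functor on the Chevalley--Eilenberg complex and on the vanishing $\z^\r=0$, which is precisely where the hypothesis that $\z$ is pure of nonzero weight enters. Everything else --- that $c_{\overline{s}}$ is a cocycle and its class independent of the choices, the lifting computation, and the verification that all constructions are natural in $A$ and given by polynomial formulas, hence morphisms of schemes --- is routine.
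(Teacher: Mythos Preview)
Your proof is correct and takes essentially the same route as the paper's: identify $\Hnab^1(\cG,\cN)$ and $\Hnab^1(\cG,\cN/\cZ)$ with schemes of graded $R$-invariant Lie-algebra sections via Corollary~\ref{cor:concrete}, then run obstruction theory for the central extension by $\z$. The only cosmetic differences are that the paper invokes Lemma~\ref{lem:invar_coho} and the observation $C^0_{(0)}(\g,\z)=\z_0=0$ to identify $H^1(\g,\z)^\Rbar$ with $Z^1_{(0)}(\g,\z)^R$ (where you instead pass through $\Hom_R(H_1(\u),\z)$), and that the paper asserts the graded linear splitting of $\ghat_\dot\to\ghat_\dot/\z$ is unique rather than checking independence of the choice.
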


Informally, we summarize this by saying that the sequence
$$
\xymatrix{
\Hnab^1(\cG,\cN) \save !L(.85)="Hnab";
"Hnab",\ar@(ul,dl)"Hnab"_{{H^1(\g,\z)^\Rbar}}\restore
\ar[r]^-(.55)\pi &
\Hnab^1(\cG,\cN/\cZ) \ar[r]^(.55){\delta} & H^2(\g,\z)^\Rbar
}
$$
is exact, where $\pi$ is the projection that takes a section to its quotient by
$\cZ$.

\begin{proof}
Denote $\Gr^W_\dot\g$, $\Gr^W_\dot\n$ and $\Gr^W_\dot\ghat$ by $\g_\dot$,
$\n_\dot$ and $\ghat_\dot$, respectively. Identify $\Hnab^1(\cG,\cN)$ with the
scheme of graded, $R$-invariant sections of $\ghat_\dot \to \g_\dot$ and
$\Hnab^1(\cG,\cN/\cZ)$ with the scheme of graded, $R$-invariant sections of
$\ghat_\dot/\z \to \g_\dot$.

We first define the ``connecting morphism'' $\delta$. Suppose that $\sigma$ is a
graded, $R$-invariant section of $(\ghat_\dot/\z)\otimes_F A \to
\g_\dot\otimes_F A$. Define $\sigmatilde$ to be the section
$$
\xymatrix{
\g_\dot \otimes_F A \ar[r]^(.45)\sigma &
(\ghat_\dot/\z)\otimes_F A \ar[r]^(.55){r\otimes A} & \ghat_\dot \otimes_F A
}
$$
of $\ghat\otimes_F A \to \g\otimes_F A$, where $r$ is the unique, $F$-linear,
graded section of $\ghat \to \ghat/\z$. It is $R$-invariant. Note that, in
general, $r$ is not a Lie algebra homomorphism. The obstruction to $\sigmatilde$
being a Lie algebra homomorphism is the $R$-invariant function $h_\sigma :
\Lambda^2 \g \to \z$ defined by
$$
h_\sigma(x,y) := [\sigmatilde(x),\sigmatilde(y)]- \sigmatilde([x,y]).
$$
It is an $R$-invariant (and hence $\what$-invariant) 2-cocycle: $h_\sigma \in
Z^2_{(0)}(\g_\dot,\z)^R\otimes_F A$. Define
$$
\delta : \Hnab^1(\cG,\cN/\cZ)(A) \to H^2(\g,\z)^\Rbar\otimes_F A
$$
by taking $\sigma$ to the class of $h_\sigma$ in
$H^2_{(0)}(\g_\dot,\z)^R\otimes_F A$, which, by Lemma~\ref{lem:invar_coho}, is
isomorphic to $H^2(\g,\z)^\Rbar\otimes_F A$. This is easily seen to be a
morphism of $F$-schemes.

If $\delta(\sigma) \neq 0$, $\sigma$ does not lift to a graded section of
$\ghat_\dot\otimes_F A \to \ghat\otimes_F A$. If $\delta(\sigma) = 0$,
Lemma~\ref{lem:invar_coho} implies that there is an $R$-invariant 1-cochain $v
\in C^1_{(0)}(\g,\z)\otimes_F A$ such that $\delta(v)=h_\delta$. Then
$\sigmatilde + v : \g_\dot\otimes_F A \to \ghat_\dot\otimes_F A$ is a graded Lie
algebra section of $\ghat_\dot\otimes_F A \to \g_\dot\otimes_F A$ that lifts
$\sigma$. This establishes the exactness at $\Hnab^1(\cG,\cN/\cZ)$.

Our last task is to show that the fibers of $\Hnab^1(\cG,\cN) \to
\Hnab^1(\cG,\cN/\cZ)$ are principal $H^1(\g,\z)^\Rbar$ spaces. First observe
that, since $\z$ has negative weight, $C^0_{(0)}(\g,\z) = \z_0 = 0$.
Consequently, there are no invariant 1-coboundaries and
$$
H^1(\g,\z)^\Rbar = H^1_{(0)}(\g,\z)^\Rbar = Z^1_{(0)}(\g,\z)^R.
$$
Next observe that if $\sigma$ is an $R$-invariant, graded section of
$\ghat_\dot\otimes_F A \to \g_\dot\otimes_F A$ and $f : \g_\dot\to \z\otimes_F
A$ is an $R$-invariant, graded function, then $\sigma+f$ is a
graded section of $\ghat_\dot\otimes_F A \to \g_\dot\otimes_F A$ if and only if
$f \in Z^1_{(0)}(\g,\z)^R\otimes_F A$. The last assertion follows.
\end{proof}

\section{Relation to Kim's work}

Kim \cite{kim:program} is interested in $S$-integral points of varieties
(particularly curves) defined over number fields. Here we give a brief
explanation of how his ideas can be expressed naturally in terms of the variant
of non-abelian cohomology developed in this paper.

Suppose that $k$ is a number field and that $X$ is a smooth, geometrically
connected, projective $k$-scheme. Suppose that $S$ is a finite set of primes in
$\O_k$ and that $\X$ is a smooth model of $X$ over the ring of $S$-integers
$\O_{k,S}$ of $k$. Set $G_{k,S}$ be the Galois group of the maximal algebraic
extension of $k$ unramified outside $S$. It acts on the first cohomology of
$X\otimes_k \kbar$:
$$
\rho_\X : G_{k,S} \to \Aut \Het^1(X\otimes_k \kbar,\Ql(1)).
$$
The Zariski closure $R$ of the image of $\rho_\X$ is a reductive $\Ql$-group
that, by a theorem of Deligne (cf.\ \cite[(2.3)]{serre:ladiques}), contains the
scalar matrices. Since $H:=\Het^1(X\otimes_k \kbar,\Ql(1))$ has weight $-1$, we
define $\w : \Gm \to R$ by $\w(t) = t^{-1}\id_H$.

We will regard $\rho_\X$ as a homomorphism $G_{k,S} \to R(\Ql)$. It is Zariski
dense by construction. Define $\cG$ to be the weighted completion of $G_{k,S}$
with respect to $\rho_\X$ and the cocharacter $\w$.\footnote{Definitions can be
found in \cite{hain-matsumoto:weighted}.} This is a proalgebraic $\Ql$-group
which is an extension of $R$ by a prounipotent group.

Fix a geometric point $\etabar$ of $X$. Denote by $\cGhat$ the weighted
completion of $\pi_1(X,\etabar)$ with respect to the homomorphism
$\pi_1(X,\etabar)\to G_{k,S} \to R(\Ql)$ and $\w$. Denote the continuous
unipotent completion of $\pi_1(X\otimes_k\kbar,\etabar)$ over $\Ql$ by $\cP$.
Since $H_1(\cP) \cong H$ is finite dimensional and has weight $-1$, the action
of $\pi_1(X,\etabar)$ on $\pi_1(X\otimes_k\kbar,\etabar)$ by conjugation induces
an action $\phihat$ of $\cGhat$ on $\cP$. 

Right exactness of these proalgebraic completions implies that the sequence
$$
\cP \to \cGhat \to \cG \to 1
$$
is exact. If $\cP$ has trivial center, as it does when $X$ is a hyperbolic
curve, then the composite
$$
\xymatrix{
\cP \ar[r] & \cGhat \ar[r]^(0.4){\phihat} & \Aut_W\cP
}
$$ 
is injective, which implies that the sequence
$$
1 \to \cP \to \cGhat \to \cG \to 1
$$
is exact. The pullback of this sequence along $G_{k,S} \to \cG(\Ql)$ is an exact
sequence
$$
1 \to \cP(\Ql) \to \Ghat_\X \to G_{k,S} \to 1
$$
This is the kind of extension considered by Kim in the non-abelian cohomology
developed in \cite{kim:coho}. The universal mapping property of weighted
completion implies that the natural mapping
\begin{multline*}
\Hnab^1(\cG,\cP)(\Ql)
=\{\text{sections of }\cGhat \to \cG\}/\cP(\Ql) \cr
\to \{\text{sections of }\Ghat_\X \to G_{k,S}\}/\cP(\Ql)
\end{multline*}
is a bijection. This implies that the version of non-abelian cohomology 
developed in \cite{kim:coho} agrees with the one developed in this paper, at
least on $\Ql$-rational points.

\appendix

\section{Cohomology of $\Gm\otimes A$}

This material should be standard. But just in case, we give an exposition. Here
$\Gm$ will be regarded as a group scheme over $\Z$. Denote the rank 1
$\Gm$-module given by the $d$th power of the standard representation by $V_d$.

\begin{proposition}
\label{prop:appendix}
For all $\Z$-algebras $A$ the cohomology group $H^1(\Gm\otimes_\Z A,V_d)$
vanishes.
\end{proposition}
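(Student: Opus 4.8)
The plan is to compute $H^1$ directly from the Hochschild cochain complex. Recall that for an affine group scheme $G=\mathrm{Spec}\,\O(G)$ over $A$ and a rational $G$-module $M$, the group $H^\dot(G,M)$ is the cohomology of the complex with $C^n(G,M)=M\otimes_A\O(G)^{\otimes_A n}$ and the usual cobar differential (see \cite[I.4]{jantzen}). For $G=\Gm\otimes_\Z A$ we have $\O(G)=A[t,t^{-1}]$, so a $1$-cochain is an element of $M\otimes_A A[t,t^{-1}]$, which I will regard as a polynomial map $\lambda\mapsto f(\lambda)$, functorial in the $A$-algebra $B$ and the point $\lambda\in\Gm(B)=B^\times$. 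In these terms the coboundary of a $0$-cochain $m\in M$ is the map $\lambda\mapsto\lambda\cdot m-m$, and a $1$-cochain $f$ is a cocycle precisely when
$$
\lambda\cdot f(\mu)-f(\lambda\mu)+f(\lambda)=0 .
$$

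Next I would specialize to $M=V_d=A\cdot e$, with $\lambda\cdot e=\lambda^d e$. Writing a $1$-cochain as $f(\lambda)=\bigl(\sum_n m_n\lambda^n\bigr)e$ with $m_n\in A$ (almost all zero) and comparing coefficients of $\lambda^a\mu^b$ in the cocycle equation, the relations obtained force $m_n=0$ for $n\notin\{0,d\}$, and, when $d\neq 0$, the only surviving relation is $m_d=-m_0$ (when $d=0$ they force $f=0$). Hence every $1$-cocycle has the form $f(\lambda)=m_0(1-\lambda^d)e$. But the coboundary of the $0$-cochain $-m_0 e\in V_d$ is exactly $\lambda\mapsto\lambda\cdot(-m_0e)-(-m_0e)=m_0(1-\lambda^d)e$. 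So $Z^1(G,V_d)=B^1(G,V_d)$, and $H^1(\Gm\otimes_\Z A,V_d)=0$.

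I would also record the conceptual reason, which in fact gives vanishing of $H^i$ in every degree $i\ge 1$: $\Gm$ is diagonalizable, so over any $\Z$-algebra $A$ the category $\Rep(\Gm\otimes_\Z A)$ is equivalent to the category of $\Z$-graded $A$-modules, with $V_d$ corresponding to $A$ concentrated in degree $d$. Under this equivalence the invariants functor is ``take the degree-zero graded piece'', which is exact; equivalently, the unit object $A$ (in degree $0$) is projective, so $\Ext^i_{\Rep(\Gm\otimes_\Z A)}(A,V_d)$ vanishes for all $i\ge 1$.

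There is no serious obstacle here. The only point that genuinely needs attention — and the reason to include this appendix at all — is that $A$ is an arbitrary ring, not a field, so one cannot invoke a Maschke-type averaging argument; but the cochain computation above is $A$-linear and ``defined over $\Z$'', hence uniform in $A$, and the diagonalizability of $\Gm$ is exactly what stands in for complete reducibility of representations.
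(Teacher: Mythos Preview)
Your argument is correct and is essentially the same direct cochain computation as the paper's: both identify $1$-cochains with Laurent polynomials, impose the cocycle identity, compare coefficients to force $f(\lambda)=m(1-\lambda^d)e$ (or $0$ when $d=0$), and observe this is a coboundary. Your added remark that $\Gm$ is diagonalizable---so $\Rep(\Gm\otimes_\Z A)$ is $\Z$-graded $A$-modules and the invariants functor is exact---is a pleasant conceptual bonus that the paper does not include, and in fact yields vanishing in all positive degrees.
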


\begin{proof}
The group $H^1(\Gm\otimes_\Z A,V_d)$ is the space of $V_d$-valued cocycles
modulo the $V_d$ valued coboundaries. The 1-cocycles on $\Gm\otimes_\Z A$
correspond to elements $f(t) \in A[t,\tinv]$ that satisfy the cocycle condition
\begin{equation}
\label{eqn:cocycle}
f(t\otimes t) = f(t\otimes 1) + (t^d\otimes 1)f(1\otimes t)
\in A[t,\tinv]\otimes_A A[t,\tinv]
\end{equation}
The 1-coboundaries are all of the form $f(t) = a(t^d-1)$ for some $a\in A$.

To see that every cocycle is a coboundary, write $f(t) = \sum_{-N}^N a_n t^n$,
where each $a_n \in A$. The cocycle condition (\ref{eqn:cocycle}) becomes
$$
\sum a_n t^n \otimes t^n = \sum a_n t^n\otimes 1 + \sum a_n t^d\otimes t^n.
$$
Note that the only ``diagonal terms'' that appear on the right-hand side
are $1\otimes 1$ and possibly $t^d\otimes t^d$.

When $d=0$ the cocycle condition (\ref{eqn:cocycle}) implies that $f(t)=0$,
so that there are no non-zero cocycles and no coboundaries. If $d\neq 0$, then
the cocycle condition implies that $a_0 + a_d = 0$ and that $a_n = 0$ when
$n\neq 0, d$. That is, $f(t) = a_d(t^d - 1)$, which is a coboundary.
\end{proof}

\begin{corollary}
\label{cor:appendix}
Suppose that $A$ is a $\Z$-algebra and that
$$
0 \to U \to G \to \Gm\otimes_\Z A \to 1
$$
is an extension of $\Gm\otimes_\Z A$ in the category of affine $A$-groups, where
$U$ is unipotent. If $U$ has a descending central series
$$
U = U^1 \supseteq U^2 \supseteq U^3 \supseteq \cdots \supseteq U^N \supseteq
U^{N+1} = 1,
$$
where each graded quotient $U^m/U^{m+1}$ is a direct sum of $\Gm\otimes_\Z
A$-modules isomorphic to $V_d$ for some $d$, then any two sections of $G \to
\Gm\otimes_\Z A$ over $A$ are conjugate by an element of $U(A)$.
\end{corollary}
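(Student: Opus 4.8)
The plan is to prove Corollary~\ref{cor:appendix} by induction on the length $N$ of the descending central series, using Proposition~\ref{prop:appendix} to control the obstruction at each stage. Fix two sections $s_0, s_1 : \Gm\otimes_\Z A \to G$. The goal is to produce $u \in U(A)$ with $u\, s_0\, u^{-1} = s_1$.

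For the base case $N=1$, the group $U = U^1$ is a direct sum of copies of $V_d$'s, i.e.\ $U$ is abelian (indeed a vector group with a $\Gm$-action), and the set of sections of $G \to \Gm\otimes_\Z A$ is either empty or a torsor under $Z^1(\Gm\otimes_\Z A, U)$, with $U(A)$-conjugation acting through the coboundaries $B^1(\Gm\otimes_\Z A, U)$. So the set of $U(A)$-conjugacy classes of sections is a torsor under $H^1(\Gm\otimes_\Z A, U) = \bigoplus_d H^1(\Gm\otimes_\Z A, V_d)$, which vanishes by Proposition~\ref{prop:appendix}. Hence any two sections are conjugate by an element of $U(A)$.

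For the inductive step, suppose the claim holds for unipotent groups with a series of length $< N$. Let $Z := U^N$, a central subgroup of $U$ (hence normal in $G$) that is a sum of copies of various $V_d$'s. Pass to the quotient extension $0 \to U/Z \to G/Z \to \Gm\otimes_\Z A \to 1$; here $U/Z$ inherits a descending central series of length $N-1$ with the same type of graded quotients. The images $\bar s_0, \bar s_1$ of $s_0, s_1$ in $G/Z$ are sections, so by induction there is $\bar u \in (U/Z)(A)$ with $\bar u\, \bar s_0\, \bar u^{-1} = \bar s_1$. Lift $\bar u$ to $u' \in U(A)$ (possible since $U(A) \to (U/Z)(A)$ is surjective, as $Z$ is a vector group and $\Char$-zero exponentials split things — or more simply since $U \to U/Z$ has a section of schemes). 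Replacing $s_0$ by $u'\, s_0\, u'^{-1}$, we may assume $s_0$ and $s_1$ already agree modulo $Z$, i.e.\ $s_1(t) = c(t)\, s_0(t)$ where $c : \Gm\otimes_\Z A \to Z$ is a map with $c(t_1 t_2) = c(t_1)\cdot {}^{t_1}c(t_2)$ — that is, $c \in Z^1(\Gm\otimes_\Z A, Z)$ (using that $Z$ is central, so the twisting action is the $\Gm$-action coming from the extension structure, which is the given direct-sum-of-$V_d$'s action). Conjugating $s_0$ by $z \in Z(A)$ changes $c$ by the coboundary $t \mapsto z \cdot ({}^t z)^{-1}$. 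Since $H^1(\Gm\otimes_\Z A, Z) = \bigoplus H^1(\Gm\otimes_\Z A, V_d) = 0$ by Proposition~\ref{prop:appendix}, the class of $c$ is trivial, so there is $z \in Z(A)$ with $z\, s_0\, z^{-1} = s_1$. Composing the conjugations by $u'$ and $z$ gives the desired element of $U(A)$.

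The main obstacle is bookkeeping around the extension structure rather than any deep point: one must check that the obstruction cochain $c$ really is a $1$-cocycle for the $\Gm\otimes_\Z A$-module structure on $Z$ that matches the hypothesis (a direct sum of $V_d$'s), so that Proposition~\ref{prop:appendix} applies termwise. This is where centrality of $Z = U^N$ in $U$ is used — it ensures the conjugation action of $G$ on $Z$ factors through $\Gm\otimes_\Z A$ and agrees with the module structure in the statement — and where one must be mild care that everything is happening over the base ring $A$ (so "$H^1$ vanishes" is the scheme-theoretic statement of Proposition~\ref{prop:appendix}, valid for all $\Z$-algebras $A$, which is exactly why that proposition was phrased integrally). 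A secondary routine point is lifting $\bar u$ through $U(A) \to (U/Z)(A)$, which is immediate since in characteristic-free generality $U \to U/Z$ admits a section as a scheme (the series splits as schemes), so surjectivity on $A$-points holds.
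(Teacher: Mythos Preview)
Your proof is correct and follows essentially the same induction-on-nilpotency-length strategy as the paper: reduce modulo a central subgroup, apply the inductive hypothesis, lift the conjugating element, and then kill the remaining discrepancy using the vanishing of $H^1(\Gm\otimes_\Z A,V_d)$ from Proposition~\ref{prop:appendix}. The only cosmetic difference is that you take $Z=U^N$, the last term of the given series, whereas the paper takes $Z$ to be the full center of $U$ and phrases the final abelian step via a pullback square rather than an explicit cocycle computation; both choices work and the arguments are interchangeable.
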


\begin{proof}
When $U$ is abelian, the result follows directly from
Proposition~\ref{prop:appendix} by standard arguments. Suppose that $N>1$ and
that the result is true when the length of the lower central series of the
unipotent kernel is $<N$. Suppose that the length of the lower central series of
$U$ is $N$. Suppose that $s_0$ and $s_1$ are two sections of $G \to
\Gm\otimes_\Z A$. Let $Z$ be the center of $U$. This is normal in $G$, so one
has the extension
$$
1 \to U/Z \to G/Z \to \Gm\otimes_\Z A \to 1.
$$
of affine $A$-groups. The sections $s_0$ and $s_1$ induce sections $\sbar_0$
and $\sbar_1$ of this extension. By our induction hypothesis, there is an
element $v$ of $(U/Z)(A)$ such that $\sbar_1 = v\sbar_0 v^{-1}$. Since $U$ is
unipotent, the sequence
$$
1 \to Z(A) \to U(A) \to (U/Z)(A) \to 1
$$
is exact. Choose a lift $\vtilde \in U(A)$ of $v$. Consider the diagram
$$
\xymatrix{
1 \ar[r] & Z \ar[r]\ar@{=}[d] & G \ar[r] & G/Z \ar[r] & 1\cr
1 \ar[r] & Z \ar[r] & H \ar[r]\ar[u] & \Gm\otimes_\Z A
\ar[r]\ar[u]_{\sbar_1} & 1
}
$$
where the right-hand square is a pullback. The sections $s_1$ and $\vtilde s_0
\vtilde^{-1}$ induce sections $\sigma_1$ and $\sigma_0$ of $H \to \Gm\otimes_\Z
A$. Since $Z$ is abelian, there exists $z\in Z(A)$ such that $\sigma_1=z\sigma_0
z^{-1}$. This implies that $s_1 = (z\vtilde) s_0 (z\vtilde)^{-1}$.
\end{proof}

\end{document}